\definecolor{webgreen}{rgb}{0,.5,0}
\definecolor{webbrown}{rgb}{.6,0,0}
\begin{document}

\theoremstyle{plain}
\newtheorem{theorem}{Theorem}
\newtheorem{corollary}[theorem]{Corollary}
\newtheorem{lemma}{Lemma}
\newtheorem{example}{Examples}
\newtheorem*{remark}{Remark}

\begin{center}
\vskip 1cm
{\LARGE\bf
Series associated with a forgotten \\

\vskip .3cm

identity of N\"orlund}

\vskip 1cm

{\large
Kunle Adegoke \\
Department of Physics and Engineering Physics, \\ Obafemi Awolowo University, 220005 Ile-Ife \\ Nigeria \\
\href{mailto:adegoke00@gmail.com}{\tt adegoke00@gmail.com}

\vskip 0.2 in

Robert Frontczak \\
Independent Researcher, 72762 Reutlingen \\ Germany \\
\href{mailto:robert.frontczak@web.de}{\tt robert.frontczak@web.de}
}

\end{center}

\vskip .2 in

\begin{abstract}
We apply a seemingly forgotten series expression of N\"orlund for the psi function to express
infinite series involving inverse factorials in closed form. Many of such series 
contain products of Catalan numbers and (odd) harmonic numbers. We also prove some new series for $\pi$.
\end{abstract}

\noindent 2010 {\it Mathematics Subject Classification}: 30B50, 33E20.

\noindent \emph{Keywords:} Series, Psi function, Harmonic number, Catalan number.

\bigskip

\section{Motivation and Preliminaries}

The Gamma function, $\Gamma(z)$, is defined for $\Re(z)>0$ by the integral \cite{Srivastava}
\begin{equation*}
\Gamma (z) = \int_0^\infty e^{- t} t^{z - 1}\,dt.
\end{equation*}
The function $\Gamma(z)$ has the property that it extends the classical factorial function to the complex plane
by $(z-1)!=\Gamma(z)$. It can be extended to the whole complex plane by analytic continuation.
Closely related to the Gamma function is the psi (or digamma) function defined by $\psi(z)=\Gamma'(z)/\Gamma(z)$.
It possesses a range of integral representations and the infinite series expressions.
Two of such series expressions are \cite[p. 14]{Srivastava}
\begin{equation}\label{psi_expr1}
\psi (z) = - \gamma + \sum_{k=0}^\infty \left ( \frac{1}{k+1}-\frac{1}{k+z} \right ),
\end{equation}
and
\begin{equation}\label{psi_expr2}
\psi (z+n) = \psi(z) + \sum_{k=1}^n \frac{1}{z+k-1},
\end{equation}
where in \eqref{psi_expr1} $\gamma$ denotes the Euler-Mascheroni constant. Coffey \cite{Coffey1} has obtained a variety 
of other series and integral representations of $\psi(z)$. Summations over digamma and polygamma functions were studied, among others,
by Alzer et al. \cite{Alzer} and Coffey \cite{Coffey2}. \\

A series expression that seems to be not well known is the following expression
\begin{equation}\label{psi_expr3}
\psi (z+h) - \psi(z) = \sum_{n=0}^\infty \frac{(-1)^n\,h\,(h-1)\cdots (h-n)}{(n+1)\,z\,(z+1)\cdots (z+n)},
\end{equation}
which holds for all $z,h\in\mathbb{C}$ with $\Re(z)>0$ and $\Re(z+h)>0$. Identity \eqref{psi_expr3}
is stated by Hille in his paper \cite{Hille} from 1927 and is attributed to N\"orlund. \\

The seemingly forgotten identity \eqref{psi_expr3} will play the key role in this paper. The common feature of
all series studied here is the appearance of inverse factorials. Similar series were studied by Sofo \cite{Sofo1,Sofo2} 
and more recently by Karp and Prilepkina \cite{Karp}, and also by Boyadzhiev in his papers from 2020 and 2023 
\cite{Boyadzhiev1,Boyadzhiev2}. A finite variant was the subject of another paper by Sofo \cite{Sofo3}. \\

Among other things we will establish connections with harmonic numbers $H_\alpha$ and odd harmonic numbers $O_\alpha$ defined
for $0\ne\alpha\in\mathbb C\setminus\mathbb Z^{-}$ by the recurrence relations
\begin{equation*}
H_\alpha = H_{\alpha - 1} + \frac{1}{\alpha} \qquad \text{and} \qquad O_\alpha = O_{\alpha - 1} + \frac{1}{2\alpha - 1},
\end{equation*}
with $H_0=0$ and $O_0=0$. Harmonic numbers are connected to the digamma function through the fundamental relation
\begin{equation}\label{Harm_psi}
H_\alpha = \psi(\alpha + 1) + \gamma.
\end{equation}

Generalized harmonic numbers $H_\alpha^{(m)}$ and generalized odd harmonic numbers $O_\alpha^{(m)}$ of order $m\in\mathbb C$ 
are defined by
\begin{equation*}
H_\alpha^{(m)} = H_{\alpha - 1}^{(m)} + \frac{1}{\alpha^m} \qquad \text{and} \qquad
O_\alpha^{(m)} = O_{\alpha - 1}^{(m)} + \frac{1}{(2\alpha - 1)^m},
\end{equation*}
with $H_0^{(m)}=0$ and $O_0^{(m)}=0$ so that $H_\alpha=H_\alpha^{(1)}$ and $O_\alpha=O_\alpha^{(1)}$.

The recurrence relations imply that if $\alpha=n$ is a non-negative integer, then
\begin{equation*}
H_n^{(m)} = \sum_{j = 1}^n \frac{1}{j^m} \qquad \text{and} \qquad O_n^{(m)} = \sum_{j = 1}^n \frac{1}{(2j - 1)^m}.
\end{equation*}

Generalized harmonic numbers are linked to the polygamma functions $\psi^{(r)}(z)$ of order $r$ defined by
\begin{equation*}
\psi^{(r)} (z) = \frac{d^r}{dz^r}\psi(z) = (- 1)^{r + 1} r!\sum_{j = 0}^\infty \frac{1}{{( j + z )^{r + 1} }},
\end{equation*}
through
\begin{equation*}
H_z^{(r)} = \zeta (r) + \frac{(- 1)^{r - 1}}{(r - 1)!} \psi^{(r - 1)} (z + 1),
\end{equation*}
where $\zeta(y)$ is the Riemann zeta function.

To whet the reader's appetite for reading on, we present the following samples from our results:
\begin{equation*}
1 + \sum_{n=1}^\infty \frac{1}{n+1} \prod_{j=1}^n \frac{3j-1}{3j+1} = \frac{\pi}{\sqrt{3}},
\end{equation*}
\begin{equation*}
1 + \sum_{n=1}^\infty \frac{2^n}{n+1} \prod_{j=1}^n \frac{5j-2}{10j+3} = \frac{3\pi}{4}\sqrt{\frac{\sqrt{5}}{\alpha^3}},
\qquad (\alpha = (1+\sqrt{5})/2)
\end{equation*}
\begin{equation*}
\sum_{n = 0}^\infty \frac{2^{2n}}{\binom{{n}}{h}\binom{{2\left( {n + 1} \right)}}{{\left( {n + 1} \right)}}\left( {n + 1} \right)^2}
= \frac{\pi}{4}\,\frac{H_{h - 1/2} - H_{- 1/2}}{\sin (\pi h)},\quad h\in\mathbb C\setminus\mathbb Z,\; 2h\not\in\mathbb Z^{-},
\end{equation*}
and for non-negative integer $h$,
\begin{equation*}
\begin{split}
\sum_{n = h}^\infty \frac{{\binom{{2\left( {n - h} \right)}}{{n - h}}}}{{\binom{{n}}{h}\binom{{2\left( {n + 1} \right)}}{{n + 1}}}}\frac{{O_{n - h} }}{{\left( {n + 1} \right)^2 }} &= - \left( { - 1} \right)^h \frac{{\zeta \left( 2 \right) - H_h^{(2)} }}{{2\left( {h + 1} \right)\binom{{2\left( {h + 1} \right)}}{{h + 1}}}} + \left( { - 1} \right)^h \frac{{H_h + 2\ln 2}}{{\left( {h + 1} \right)\binom{{2\left( {h + 1} \right)}}{{h + 1}}}}{O_{h + 1}}\\
&\qquad\qquad - \sum_{n = 0}^{h - 1} {\frac{{\left( { - 1} \right)^{n - h} \binom{{h}}{n}}}{{\binom{{2\left( {h - n} \right)}}{{h - n}}\binom{{2\left( {n + 1} \right)}}{{n + 1}}}}\frac{{O_{n - h} }}{{\left( {n + 1} \right)^2 }}}.
\end{split}
\end{equation*}

\section{First new series associated with \eqref{psi_expr3} }

We firstly prove two results that are an immediate consequence of \eqref{psi_expr3} and which yield
some possibly new series for $\pi$. Two such series provide unexpected expressions involving the golden section $\alpha=(1+\sqrt{5})/2$.
We then evaluate several types of series involving the ratio of Catalan numbers and binomial coefficients in closed form.

\begin{theorem}\label{thm_series_pi}
For each integer $k\geq 2$ we have the expression
\begin{equation}
(k-2) \left (1 + \sum_{n=1}^\infty \frac{1}{n+1} \prod_{j=1}^n \frac{kj - (k-2)}{kj+1}\right ) = \pi \cot\left (\frac{\pi}{k}\right ).
\end{equation}
In particular, we have the curious expressions for $\pi$ in the form
\begin{equation}\label{pi_id1}
1 + \sum_{n=1}^\infty \frac{1}{n+1} \prod_{j=1}^n \frac{3j-1}{3j+1} = \frac{\pi}{\sqrt{3}},
\end{equation}
\begin{equation}\label{pi_id2}
1 + \sum_{n=1}^\infty \frac{2^n}{n+1} \prod_{j=1}^n \frac{2j-1}{4j+1} = \frac{\pi}{2},
\end{equation}
\begin{equation}\label{pi_id3}
1 + \sum_{n=1}^\infty \frac{1}{n+1} \prod_{j=1}^n \frac{5j-3}{5j+1} = \frac{\pi}{3} \sqrt{\frac{\alpha^3}{\sqrt{5}}},
\end{equation}
and
\begin{equation}\label{pi_id4}
1 + \sum_{n=1}^\infty \frac{2^n}{n+1} \prod_{j=1}^n \frac{3j-2}{6j+1} = \frac{\sqrt{3} \pi}{4}.
\end{equation}
\end{theorem}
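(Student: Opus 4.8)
The starting point is to specialize N\"orlund's identity \eqref{psi_expr3} to a choice of $z$ and $h$ that produces the claimed product of linear factors. Write the $n$-th term of \eqref{psi_expr3} as
$\frac{(-1)^n h (h-1)\cdots(h-n)}{(n+1) z(z+1)\cdots(z+n)}$, so that after pulling the $n=0$ term out we get $h/z$ plus a tail; the factor $(h-1)(h-2)\cdots(h-n)$ in the numerator and $(z+1)\cdots(z+n)$ in the denominator should telescope into $\prod_{j=1}^n \frac{h-j}{z+j}$. To match the stated product $\prod_{j=1}^n \frac{kj-(k-2)}{kj+1}$, divide numerator and denominator of each factor by $k$: we want $\frac{h-j}{z+j} = \frac{j - (k-2)/k}{j + 1/k}$, which forces $h = (k-2)/k = 1 - 2/k$ and $z = 1/k$. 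With these values $h/z = k-2$, and a short check shows $\Re(z) = 1/k > 0$ and $\Re(z+h) = (k-1)/k > 0$, so \eqref{psi_expr3} applies. The left-hand side of \eqref{psi_expr3} becomes $\psi\!\left(1 - \tfrac1k\right) - \psi\!\left(\tfrac1k\right)$, and after multiplying through by $z = 1/k$ (to clear the leading $h/z$ into $k-2$) we arrive at
$(k-2)\bigl(1 + \sum_{n\ge1} \tfrac{1}{n+1}\prod_{j=1}^n \tfrac{kj-(k-2)}{kj+1}\bigr) = \tfrac{1}{k}\bigl(\psi(1-\tfrac1k)-\psi(\tfrac1k)\bigr)\cdot k$; one must be a little careful to track where the overall factor $1/z = k$ cancels against the $1/k$ so that the final constant is exactly $\psi(1-\tfrac1k)-\psi(\tfrac1k)$ — I would do this bookkeeping explicitly rather than by the rough sketch just given.

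The second ingredient is the reflection formula for the digamma function, $\psi(1-x) - \psi(x) = \pi\cot(\pi x)$, applied with $x = 1/k$; this immediately turns the constant into $\pi\cot(\pi/k)$ and establishes the general formula. The four special cases then follow by elementary evaluation of $\cot(\pi/k)$ for $k=3,4,5,6$ together with tidying the product: for $k=3$, $\cot(\pi/3)=1/\sqrt3$; for $k=4$, the product $\prod \frac{4j-2}{4j+1} = \prod \frac{2(2j-1)}{4j+1}$ supplies the $2^n$ and $\cot(\pi/4)=1$, giving \eqref{pi_id2}; for $k=6$, similarly $\prod\frac{6j-4}{6j+1}=\prod\frac{2(3j-2)}{6j+1}$ yields the $2^n$ and $\cot(\pi/6)=\sqrt3$, giving \eqref{pi_id4}; and for $k=5$ one uses $\cot(\pi/5) = \sqrt{1 + 2/\sqrt5}$, which after rationalizing equals $\sqrt{\alpha^3/\sqrt5}$ with $\alpha=(1+\sqrt5)/2$ (here the golden-ratio identities $\alpha^2 = \alpha+1$ and $\sqrt5 = 2\alpha-1$ do the simplification).

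The one genuine obstacle, modest as it is, is verifying convergence of the series so that the term-by-term manipulation of \eqref{psi_expr3} is legitimate and the rearrangement (pulling out the $n=0$ term, factoring the telescoping product) is valid. Since \eqref{psi_expr3} is quoted as a convergent identity for the given parameter range, the partial products $\prod_{j=1}^n \frac{kj-(k-2)}{kj+1}$ behave like $\prod (1 - \frac{k-1}{kj+1}) \sim C n^{-(k-1)/k}$, so the general term is $O(n^{-1-(k-1)/k})$ and the series converges absolutely; I would state this estimate to justify the rearrangement. Everything else is a direct substitution into \eqref{psi_expr3} and the reflection formula, followed by the four arithmetic simplifications above, the $k=5$ case being the only one requiring more than a line.
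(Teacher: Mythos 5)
Your proposal is correct and follows essentially the same route as the paper: it substitutes $z=1/k$, $h=1-2z=(k-2)/k$ into N\"orlund's identity \eqref{psi_expr3} and invokes the digamma reflection formula $\psi(1-z)-\psi(z)=\pi\cot(\pi z)$, exactly as the authors do (they state the substitution as $h=1-2z$ up front rather than reverse-engineering it from the product, and they leave the sign bookkeeping and the $k=3,4,5,6$ evaluations implicit). Your explicit convergence estimate and the worked-out $\cot(\pi/5)=\sqrt{\alpha^3/\sqrt{5}}$ identity are correct additions but do not change the method.
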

\begin{proof}
Set $h=1-2z$ in \eqref{psi_expr3}. Then, for all $z\in\mathbb{C}$ with $0<\Re(z)<1$, we have
\begin{equation*}
\pi \cot (\pi z) = \psi(1-z) - \psi(z) = \sum_{n=0}^\infty \frac{(-1)^n}{n+1} \prod_{j=0}^n \left (\frac{1-z}{j+z} - 1\right)
\end{equation*}
and with $z=1/k,k\geq 2,$ the claim follows.
\end{proof}

\begin{theorem}\label{thm2_series_pi}
For all $z\in\mathbb{C}$ with $-1/2<\Re(z)<1/2$, we have
\begin{equation}\label{thm2_pi_id}
\sum_{n=0}^\infty \frac{2^{n+1}}{n+1} \prod_{j=0}^n \frac{2z-j}{2z-(2j+1)} = -\pi \tan (\pi z).
\end{equation}
In particular, when $z=1/3$ then \eqref{thm2_pi_id} gives \eqref{pi_id4},
and when $z=1/4$ then we get \eqref{pi_id2}. When $z=1/5$, then \eqref{thm2_pi_id} yields
\begin{equation}
1 + \sum_{n=1}^\infty \frac{2^n}{n+1} \prod_{j=1}^n \frac{5j-2}{10j+3} = \frac{3\pi}{4}\sqrt{\frac{\sqrt{5}}{\alpha^3}}.
\end{equation}
\end{theorem}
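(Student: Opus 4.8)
The plan is to specialize N\"orlund's identity \eqref{psi_expr3} in the same spirit as the proof of Theorem~\ref{thm_series_pi}, but with parameters tuned to the tangent rather than the cotangent. The key preliminary is the companion of the reflection formula,
\begin{equation*}
\psi\!\left(\tfrac12+z\right)-\psi\!\left(\tfrac12-z\right)=\pi\tan(\pi z),
\end{equation*}
which follows from $\psi(1-x)-\psi(x)=\pi\cot(\pi x)$ on putting $x=\tfrac12-z$. Its right-hand side is $-1$ times the series we want, so it is enough to expand the left-hand side via \eqref{psi_expr3}.

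Next I would apply \eqref{psi_expr3} with base point $\tfrac12-z$ and shift $h=2z$; the hypothesis $-1/2<\Re(z)<1/2$ is exactly what guarantees $\Re\!\left(\tfrac12-z\right)>0$ and $\Re\!\left(\tfrac12+z\right)>0$, so the identity is applicable and gives
\begin{equation*}
\pi\tan(\pi z)=\sum_{n=0}^\infty\frac{(-1)^n\,(2z)(2z-1)\cdots(2z-n)}{(n+1)\,\bigl(\tfrac12-z\bigr)\bigl(\tfrac32-z\bigr)\cdots\bigl(\tfrac12-z+n\bigr)}.
\end{equation*}
The rest is bookkeeping: each denominator factor satisfies $\tfrac12-z+j=-\tfrac12\bigl(2z-(2j+1)\bigr)$, so the denominator product equals $\tfrac{(-1)^{n+1}}{2^{n+1}}\prod_{j=0}^n\bigl(2z-(2j+1)\bigr)$, while the numerator is $(-1)^n\prod_{j=0}^n(2z-j)$. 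Combining the two sign factors produces an overall $-1$ and a factor $2^{n+1}$, so the $n$-th term is $-\tfrac{2^{n+1}}{n+1}\prod_{j=0}^n\tfrac{2z-j}{2z-(2j+1)}$; this is \eqref{thm2_pi_id}. The only thing to watch is this sign-and-powers-of-two accounting — there is no substantive obstacle, convergence being inherited from \eqref{psi_expr3}.

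For the displayed specializations I would substitute $z=\tfrac13,\tfrac14,\tfrac15$, factor the index-$0$ ratio $\tfrac{2z}{2z-1}$ (respectively $-2,-1,-\tfrac23$) out of every term with $n\ge1$, and reindex the remaining product from $j=1$. The cases $z=\tfrac13$ and $z=\tfrac14$ then reproduce \eqref{pi_id4} and \eqref{pi_id2}, while $z=\tfrac15$ yields $1+\sum_{n\ge1}\tfrac{2^n}{n+1}\prod_{j=1}^n\tfrac{5j-2}{10j+3}=\tfrac{3\pi}{4}\tan(\tfrac{\pi}{5})$. I would close with the elementary facts $\tan^2(\pi/5)=5-2\sqrt5$ and $\alpha^3=2\alpha+1=2+\sqrt5$ (using $\alpha^2=\alpha+1$), whence $5-2\sqrt5=\sqrt5/\alpha^3$ and $\tan(\pi/5)=\sqrt{\sqrt5/\alpha^3}$, giving the stated golden-section form.
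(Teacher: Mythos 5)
Your proposal is correct and follows exactly the paper's route: substituting $z\mapsto 1/2-z$, $h\mapsto 2z$ into N\"orlund's identity \eqref{psi_expr3} and invoking $\psi(1/2+z)-\psi(1/2-z)=\pi\tan(\pi z)$. You simply carry out in detail the sign and power-of-two bookkeeping and the $\tan(\pi/5)$ evaluation that the paper leaves implicit.
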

\begin{proof}
In \eqref{psi_expr3} make the replacements $z\mapsto 1/2-z$ and $h\mapsto 2z$, while keeping in mind that
\begin{equation*}
\psi \left(\frac{1}{2} + z \right ) - \psi \left(\frac{1}{2} - z \right ) = \pi \tan (\pi z).
\end{equation*}
\end{proof}

Since
\begin{equation*}
h (h - 1) \cdots (h - n) = \prod_{j = 0}^n (h - j) = (- 1)^{n + 1} \frac{\Gamma(n + 1 - h)}{\Gamma(- h)}
\end{equation*}
and
\begin{equation*}
z (z + 1) \cdots (z + n) = \prod_{j = 0}^n (z + j) = \frac{\Gamma(n + 1 + z)}{\Gamma(z)},
\end{equation*}
identity~\eqref{psi_expr3} can be written in terms of harmonic numbers via \eqref{Harm_psi} as
\begin{equation}\label{psi_expr4}
H_{z + h - 1} - H_{z - 1} =  - \frac{\Gamma (z)}{\Gamma (- h)} \sum_{n = 0}^\infty \frac{\Gamma (n + 1 - h)}{(n + 1)\Gamma (n + 1 + z)}.
\end{equation}

\begin{lemma}\label{lem.ho}
We have
\begin{gather}
H_{k - 1/2} = 2O_k - 2\ln 2, \\
H_{k - 1/2}^{(2)} = - 2\zeta \left( 2 \right) + 4O_k^{(2)}, \\
H_{- 1/2}^{(3)} = - 6\zeta \left( 3 \right), \\
H_{k - 1/2}^{(m + 1)} - H_{- 1/2}^{(m + 1)} = 2^{m + 1} O_k^{(m + 1)}.
\end{gather}
\end{lemma}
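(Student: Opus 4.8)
The plan is to reduce all four identities to one telescoping (``bisection'') formula plus three classical values of the (poly)gamma function at $1/2$.

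\emph{Step 1: a bisection formula.} First I would iterate the defining recurrence $H_{\alpha}^{(m)} = H_{\alpha-1}^{(m)} + \alpha^{-m}$ starting from $\alpha = -1/2$. Telescoping over $\alpha \in \{1/2,3/2,\dots,k-1/2\}$ gives, for every non-negative integer $k$,
\[
H_{k-1/2}^{(m)} - H_{-1/2}^{(m)} = \sum_{j=1}^{k}\frac{1}{(j-1/2)^{m}} = 2^{m}\sum_{j=1}^{k}\frac{1}{(2j-1)^{m}} = 2^{m}\,O_{k}^{(m)},
\]
using the finite-sum formula for $O_{k}^{(m)}$ recorded in the preliminaries. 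Replacing $m$ by $m+1$ is exactly the fourth identity; the cases $m=1$ and $m=2$ reduce the first and second identities to evaluating the base values $H_{-1/2}$ and $H_{-1/2}^{(2)}$.

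\emph{Step 2: the base values at $1/2$.} Next I would compute $H_{-1/2}^{(m)}$ for $m=1,2,3$ from \eqref{Harm_psi} and from the relation between generalized harmonic numbers and polygamma functions stated above, both evaluated at $z=-1/2$. This invokes the classical values $\psi(1/2) = -\gamma - 2\ln 2$, $\psi'(1/2) = 3\zeta(2)$ and $\psi''(1/2) = -14\zeta(3)$, all special cases of $\psi^{(r)}(1/2) = (-1)^{r+1} r!\,(2^{r+1}-1)\,\zeta(r+1)$ for $r\ge 1$, which follows from the series defining $\psi^{(r)}$ by separating even and odd denominators (equivalently from the duplication formula). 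One then obtains $H_{-1/2} = \psi(1/2)+\gamma = -2\ln 2$, $H_{-1/2}^{(2)} = \zeta(2) - \psi'(1/2) = -2\zeta(2)$, and $H_{-1/2}^{(3)} = \zeta(3) + \tfrac12\psi''(1/2) = -6\zeta(3)$; the last of these is the third identity verbatim.

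\emph{Step 3: assembly.} Substituting $H_{-1/2} = -2\ln 2$ and $H_{-1/2}^{(2)} = -2\zeta(2)$ into the bisection formula of Step 1 yields the first two identities. I do not expect a genuine obstacle here; the only points needing care are the bookkeeping of the factor $2^{m}$ when passing from $(j-1/2)^{-m}$ to $(2j-1)^{-m}$, and quoting the half-integer polygamma values with the correct normalization. If one wants the statement for non-integer $k$ as well, the bisection formula can instead be justified by noting that both sides satisfy the same first-order recurrence in $k$ (from the defining recurrences for $H^{(m)}$ and $O^{(m)}$) and agree at $k=0$.
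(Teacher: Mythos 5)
Your proof is correct. The paper actually states Lemma~\ref{lem.ho} without any proof, so there is nothing to compare against; your argument --- telescoping the defining recurrence to get $H_{k-1/2}^{(m)}-H_{-1/2}^{(m)}=2^{m}O_{k}^{(m)}$ and then feeding in the classical values $\psi(1/2)=-\gamma-2\ln 2$, $\psi'(1/2)=3\zeta(2)$, $\psi''(1/2)=-14\zeta(3)$ via \eqref{Harm_psi} and the stated polygamma relation --- is the natural one, and all the constants check out ($H_{-1/2}=-2\ln 2$, $H_{-1/2}^{(2)}=-2\zeta(2)$, $H_{-1/2}^{(3)}=-6\zeta(3)$).
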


\begin{lemma}\label{lem.iyajmzy}
We have
\begin{gather}
\left( {r - 1/2} \right)! = \frac{{\sqrt \pi  }}{{2^{2r} }}\binom{{2r}}{r}r!,\quad r\in\mathbb C\setminus\mathbb Z^{-},\\
\left( { - r - 1/2} \right)! = \sqrt \pi  \frac{{\left( { - 1} \right)^r }}{{r!}}\frac{{2^{2r} }}{{\binom{{2r}}{r}}},\quad r\in\mathbb N_0,\\
\end{gather}
\end{lemma}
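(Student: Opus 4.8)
The plan is to derive both identities from the classical duplication formula for the Gamma function,
$\Gamma(2w) = \dfrac{2^{2w-1}}{\sqrt\pi}\,\Gamma(w)\,\Gamma\!\left(w+\tfrac12\right)$,
specialised at $w=r$ in one case and $w=-r$ in the other, together with the reflection formula $\Gamma(w)\Gamma(1-w)=\pi/\sin(\pi w)$ to handle the negative argument. Throughout I will freely use the convention $(w-1)! = \Gamma(w)$ introduced in the preliminaries, and the elementary binomial identity $\binom{2r}{r} = \dfrac{\Gamma(2r+1)}{\Gamma(r+1)^2} = \dfrac{(2r)!}{(r!)^2}$, which is what lets the $\Gamma$-values repackage as a central binomial coefficient.

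\emph{First identity.} Starting from the duplication formula with $w=r$, write $\Gamma\!\left(r+\tfrac12\right) = \dfrac{\sqrt\pi\,\Gamma(2r)}{2^{2r-1}\Gamma(r)}$. Now multiply numerator and denominator by $2r\cdot r$ so that $\Gamma(2r)$ becomes $\Gamma(2r+1)=(2r)!$ and one factor $\Gamma(r)$ becomes $\Gamma(r+1)=r!$; the net effect is $\Gamma\!\left(r+\tfrac12\right) = \dfrac{\sqrt\pi\,(2r)!}{2^{2r}\,r!\,(r-1)!}\cdot\dfrac{1}{1}$ after rearranging, and rewriting $(2r)!/((r!)^2)=\binom{2r}{r}$ gives $\left(r-\tfrac12\right)! = \Gamma\!\left(r+\tfrac12\right) = \dfrac{\sqrt\pi}{2^{2r}}\binom{2r}{r} r!$. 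The manipulation is purely algebraic once the duplication formula is in hand, and by analytic continuation it persists for all $r\in\mathbb C\setminus\mathbb Z^{-}$, since both sides are meromorphic with matching poles.

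\emph{Second identity.} For $\left(-r-\tfrac12\right)! = \Gamma\!\left(\tfrac12-r\right)$ with $r\in\mathbb N_0$, the cleanest route is to combine the first identity with reflection. Apply $\Gamma\!\left(\tfrac12-r\right)\Gamma\!\left(\tfrac12+r\right) = \dfrac{\pi}{\sin\!\left(\pi(\tfrac12+r)\right)} = \dfrac{\pi}{\cos(\pi r)} = (-1)^r\pi$, so that $\Gamma\!\left(\tfrac12-r\right) = \dfrac{(-1)^r\pi}{\Gamma\!\left(r+\tfrac12\right)}$; then substitute the value of $\Gamma\!\left(r+\tfrac12\right)$ just obtained to get $\left(-r-\tfrac12\right)! = \dfrac{(-1)^r\pi}{\frac{\sqrt\pi}{2^{2r}}\binom{2r}{r} r!} = \sqrt\pi\,\dfrac{(-1)^r}{r!}\,\dfrac{2^{2r}}{\binom{2r}{r}}$, which is exactly the claimed formula. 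I do not anticipate a genuine obstacle here; the only point requiring a little care is bookkeeping the powers of $2$ and the shift between $\Gamma(2r)$ and $(2r)!$ (equivalently, making sure the extra factors $2r$ and $r$ introduced to clean up the factorials are inserted on both sides), and noting that the restriction $r\in\mathbb N_0$ in the second identity is exactly what keeps $\cos(\pi r)=(-1)^r$ and keeps $r!$ and $\binom{2r}{r}$ finite and nonzero.
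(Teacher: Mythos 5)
The paper states Lemma~\ref{lem.iyajmzy} without any proof (it is used as a known fact), so there is no argument of the authors' to compare against; your derivation --- Legendre duplication for the first identity, then Euler reflection to get the second from the first --- is the standard route and is correct in substance. One bookkeeping slip worth fixing: your intermediate display $\Gamma\left(r+\tfrac12\right)=\frac{\sqrt\pi\,(2r)!}{2^{2r}\,r!\,(r-1)!}$ is not what the duplication formula gives. Since $\Gamma(2r)\cdot 2r=(2r)!$ and $\Gamma(r)\cdot r=r!$, the correct intermediate is $\Gamma\left(r+\tfrac12\right)=\frac{\sqrt\pi\,(2r)!}{2^{2r}\,r!}$ (equivalently with denominator $2^{2r}\,r\,(r-1)!$); as written, your expression would simplify to $\frac{\sqrt\pi}{2^{2r}}\binom{2r}{r}\,r$ rather than $\frac{\sqrt\pi}{2^{2r}}\binom{2r}{r}\,r!$. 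The final formula you state is nonetheless the correct one, the analytic-continuation remark extending it to $r\in\mathbb C\setminus\mathbb Z^{-}$ is sound, and the reflection step $\Gamma\left(\tfrac12-r\right)\Gamma\left(\tfrac12+r\right)=\pi/\cos(\pi r)=(-1)^r\pi$ for $r\in\mathbb N_0$ is handled correctly, so the second identity follows as claimed.
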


\begin{theorem}\label{Cat_series1}
We have
\begin{equation}
\sum_{n = 0}^\infty \frac{{C_n }}{{2^{2n} \left( {n + 1} \right)}} = 4\left( {1 - \ln 2} \right),
\end{equation}
\begin{equation}
\sum_{n = 0}^\infty \frac{{C_n }}{{2^{2n} \left( {n + 1} \right)\left( {n + 2} \right)}} = \frac{{10}}{3} - 4\ln 2,
\end{equation}
and more generally, if $0\ne z\in\mathbb C\setminus\mathbb Z^{-}$, $2z\not\in\mathbb Z^{-}$, then
\begin{equation}\label{eq.n0r1wj4}
\sum_{n = 0}^\infty \frac{C_n}{{2^{2n}\binom{{n + z}}{n}}} = -2z \left( {H_{z - 1} - H_{z - 1/2} } \right),
\end{equation}
where here and throughout this paper $C_j$ are Catalan numbers, defined for $j\in\mathbb N_0$ by
\begin{equation*}
C_j = \frac{\binom{{2j}}{j}}{j + 1}.
\end{equation*}
\end{theorem}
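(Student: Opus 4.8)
The plan is to obtain \eqref{eq.n0r1wj4} by specializing the harmonic‑number form \eqref{psi_expr4} of N\"orlund's identity at $h=1/2$, and then converting the resulting ratio of Gamma functions into central binomial coefficients by means of Lemma~\ref{lem.iyajmzy}. First I would record $\Gamma(-1/2)=-2\sqrt\pi$, which is the $r=1$ instance of the second identity in Lemma~\ref{lem.iyajmzy}; substituting $h=1/2$ into \eqref{psi_expr4} then yields, for $\Re(z)>0$,
\begin{equation*}
H_{z-1/2}-H_{z-1}=\frac{\Gamma(z)}{2\sqrt\pi}\sum_{n=0}^\infty\frac{\Gamma(n+1/2)}{(n+1)\,\Gamma(n+1+z)}.
\end{equation*}

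Next I would simplify the general term. By the first identity of Lemma~\ref{lem.iyajmzy} with $r=n$ one has $\Gamma(n+1/2)=(n-1/2)!=\frac{\sqrt\pi}{2^{2n}}\binom{2n}{n}\,n!$, while $\Gamma(n+1+z)=\binom{n+z}{n}\,\Gamma(z+1)\,n!=z\,\Gamma(z)\,\binom{n+z}{n}\,n!$. The factors $n!$, $\Gamma(z)$ and $\sqrt\pi$ cancel against the prefactor, and recognizing $C_n=\binom{2n}{n}/(n+1)$ collapses the right‑hand side to
\begin{equation*}
H_{z-1/2}-H_{z-1}=\frac{1}{2z}\sum_{n=0}^\infty\frac{C_n}{2^{2n}\binom{n+z}{n}},
\end{equation*}
which rearranges at once to \eqref{eq.n0r1wj4}.

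It then remains to widen the range of $z$: the computation above is valid only for $\Re(z)>0$, whereas the series on the left of \eqref{eq.n0r1wj4} converges absolutely for $\Re(z)>-1/2$ (since $C_n/2^{2n}=O(n^{-3/2})$ and $\binom{n+z}{n}\sim n^{z}/\Gamma(z+1)$), and $-2z(H_{z-1}-H_{z-1/2})$ is meromorphic with singularities confined to the points excluded in the hypothesis. Hence the two sides, being meromorphic and agreeing on the half‑plane $\Re(z)>0$, agree throughout the stated domain by analytic continuation. I expect this continuation step to be the only point needing genuine care; everything else is routine bookkeeping with Gamma‑function identities.

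Finally, the two explicit evaluations are the instances $z=1$ and $z=2$ of \eqref{eq.n0r1wj4}. For $z=1$ we have $\binom{n+1}{n}=n+1$, so the left side is $\sum_{n\ge0}C_n/(2^{2n}(n+1))$ and the right side is $2H_{1/2}=2(2O_1-2\ln2)=4(1-\ln2)$ by Lemma~\ref{lem.ho}. For $z=2$ we have $\binom{n+2}{n}=(n+1)(n+2)/2$, so the left side is $2\sum_{n\ge0}C_n/(2^{2n}(n+1)(n+2))$ and the right side is $-4(H_1-H_{3/2})=-4(1-2O_2+2\ln2)=\tfrac{20}{3}-8\ln2$; dividing by $2$ gives $\tfrac{10}{3}-4\ln2$.
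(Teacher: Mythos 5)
Your proposal is correct and follows essentially the same route as the paper: set $h=1/2$ in \eqref{psi_expr4}, rewrite the Gamma ratios via Lemma~\ref{lem.iyajmzy} to expose $C_n/(2^{2n}\binom{n+z}{n})$, and specialize to $z=1,2$ using Lemma~\ref{lem.ho}. Your added remarks on the convergence region and analytic continuation are sound and, if anything, more careful than the paper's own treatment.
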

\begin{proof}
Set $h=1/2$ in~\eqref{psi_expr4}, express as factorials and arrange as
\begin{equation*}
- \sum_{n = 0}^\infty \frac{{\left( {n - 1/2} \right)!}}{{n!\left( {n + 1} \right)}}\frac{1}{z}\frac{{z!n!}}{{\left( {z + n} \right)!}}
= 2\sqrt \pi (H_{z - 1} - H_{z - 1/2}),
\end{equation*}
that is
\begin{equation}
 - \sum_{n = 0}^\infty \frac{{\sqrt \pi  }}{{2^{2n} }}\frac{{\binom{{2n}}{n}}}{{n + 1}}\frac{1}{z}\frac{1}{{\binom{{z + n}}{n}}}
= 2\sqrt \pi (H_{z - 1} - H_{z - 1/2}),
\end{equation}
and hence~\eqref{eq.n0r1wj4}.
\end{proof}

\begin{theorem}\label{Cat_series2}
We have
\begin{equation}\label{eq.nruaw64}
\sum_{n = 0}^\infty \frac{{O_{n + 1}}}{{(n + 1)(2n + 1)}} = \frac{\pi^2}{6},
\end{equation}
\begin{equation}\label{eq.osdi1i1}
\sum_{n = 0}^\infty \frac{{C_n }}{{2^{2n} }}\frac{H_{n + 1}}{n + 1} = 8 - \frac{{2\pi ^2 }}{3},
\end{equation}
and more generally, if $0\ne z\in\mathbb C\setminus\mathbb Z^{-}$, $2z\not\in\mathbb Z^{-}$, then
\begin{equation}\label{eq.ue5hg8p}
\sum_{n = 0}^\infty \frac{{C_n }}{{2^{2n} }}\frac{H_{n + z}}{\binom{{n + z}}{n}}
= 2 (H_{z - 1} - H_{z - 1/2})(1 - z H_z) + 2z \left(- H_{z - 1}^{(2)} + H_{z - 1/2}^{(2)} \right).
\end{equation}
\end{theorem}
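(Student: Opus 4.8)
The plan is to derive the general identity \eqref{eq.ue5hg8p} by differentiating the Catalan-number formula \eqref{eq.n0r1wj4} of Theorem~\ref{Cat_series1} with respect to the parameter $z$, and then to obtain \eqref{eq.osdi1i1} and \eqref{eq.nruaw64} as the special cases $z=1$ and $z=1/2$. The starting observation is that, writing $\binom{n+z}{n}^{-1}=n!\,\Gamma(z+1)/\Gamma(z+n+1)$ and taking a logarithmic derivative, one has
\[
\frac{d}{dz}\,\frac{1}{\binom{n+z}{n}}=\frac{\psi(z+1)-\psi(z+n+1)}{\binom{n+z}{n}}=\frac{H_z-H_{z+n}}{\binom{n+z}{n}},
\]
by \eqref{Harm_psi}. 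Since the terms of $\sum_{n\ge0}C_n2^{-2n}\binom{n+z}{n}^{-1}$ are $O(n^{-3/2-\Re z})$ uniformly on compact subsets of $\{\Re z>-1/2\}$ (Stirling), and the differentiated terms pick up only a logarithmic factor, term-by-term differentiation is legitimate there.

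Next I would differentiate both sides of \eqref{eq.n0r1wj4}. On the left this gives $\sum_{n\ge0}C_n2^{-2n}(H_z-H_{z+n})\binom{n+z}{n}^{-1}$, which splits as $H_z\!\left(-2z(H_{z-1}-H_{z-1/2})\right)-\sum_{n\ge0}C_n2^{-2n}H_{z+n}\binom{n+z}{n}^{-1}$ after reusing \eqref{eq.n0r1wj4} on the first piece. On the right I would use $\tfrac{d}{dz}H_{z-1}=\psi'(z)=\zeta(2)-H_{z-1}^{(2)}$ and likewise $\tfrac{d}{dz}H_{z-1/2}=\psi'(z+\tfrac12)=\zeta(2)-H_{z-1/2}^{(2)}$, both immediate from the polygamma/generalized-harmonic relation recorded in the preliminaries, so that
\[
\frac{d}{dz}\Bigl(-2z(H_{z-1}-H_{z-1/2})\Bigr)=-2(H_{z-1}-H_{z-1/2})-2z\bigl(-H_{z-1}^{(2)}+H_{z-1/2}^{(2)}\bigr).
\]
Equating the two expressions and solving for $\sum_{n\ge0}C_n2^{-2n}H_{z+n}\binom{n+z}{n}^{-1}$, then collecting the $(H_{z-1}-H_{z-1/2})$ terms, produces exactly \eqref{eq.ue5hg8p}.

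For \eqref{eq.osdi1i1} I would set $z=1$: then $\binom{n+1}{n}=n+1$, the factor $1-zH_z=0$ annihilates the first block, $H_{z-1}^{(2)}=H_0^{(2)}=0$, and Lemma~\ref{lem.ho} gives $H_{z-1/2}^{(2)}=H_{1/2}^{(2)}=4-2\zeta(2)$, leaving $2(4-2\zeta(2))=8-\tfrac{2\pi^2}{3}$. For \eqref{eq.nruaw64} I would take $z=1/2$; Lemmas~\ref{lem.ho} and~\ref{lem.iyajmzy} yield $C_n2^{-2n}\binom{n+1/2}{n}^{-1}=\bigl((n+1)(2n+1)\bigr)^{-1}$ and $H_{n+1/2}=2O_{n+1}-2\ln2$, so \eqref{eq.ue5hg8p} at $z=1/2$ becomes $2\sum_{n\ge0}\frac{O_{n+1}}{(n+1)(2n+1)}-2\ln2\sum_{n\ge0}\frac{1}{(n+1)(2n+1)}$ on the left; inserting $\sum_{n\ge0}\frac{1}{(n+1)(2n+1)}=2\ln2$ (which is \eqref{eq.n0r1wj4} at $z=1/2$, or a direct telescoping) together with the values $H_{-1/2}=-2\ln2$, $H_{1/2}=2-2\ln2$, $H_{-1/2}^{(2)}=-2\zeta(2)$ on the right makes the $\ln^2 2$ contributions cancel and collapses the identity to $\sum_{n\ge0}\frac{O_{n+1}}{(n+1)(2n+1)}=\zeta(2)=\pi^2/6$.

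The only genuinely delicate point is the interchange of differentiation and summation; everything else is bookkeeping with the harmonic-number identities of Lemma~\ref{lem.ho}. I expect the write-up to dispose of this either by the uniform-convergence estimate sketched above on compact subsets of $\{\Re z>-1/2\}$, or by noting that both sides of \eqref{eq.n0r1wj4} are holomorphic there, so the identity may be differentiated as an identity of holomorphic functions.
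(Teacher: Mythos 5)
Your proposal is correct and follows essentially the same route as the paper: differentiate~\eqref{eq.n0r1wj4} with respect to $z$ via the logarithmic derivative of $\binom{n+z}{n}^{-1}$, reuse~\eqref{eq.n0r1wj4} to eliminate the $H_z$ term, and then specialize to $z=1$ and $z=1/2$ using Lemma~\ref{lem.ho}. The only difference is that you supply the justification for term-by-term differentiation and the explicit special-value bookkeeping, which the paper leaves implicit.
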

\begin{proof}
Differentiate~\eqref{eq.n0r1wj4} with respect to $z$ to obtain
\begin{equation}
\sum_{n = 0}^\infty \frac{{C_n }}{{2^{2n} }}\frac{{\left( {H_{n + z} - H_z } \right)}}{{\binom{{n + z}}{n}}}
= 2H_{z - 1} - 2H_{z - 1/2} + 2z \left( - H_{z - 1}^{(2)} + H_{z - 1/2}^{(2)} \right);
\end{equation}
from which~\eqref{eq.ue5hg8p} follows upon a second use of~\eqref{eq.n0r1wj4}. Evaluation of~\eqref{eq.ue5hg8p}
at $z=1/2$ gives~\eqref{eq.nruaw64} while evaluation at $z=1$ gives~\eqref{eq.osdi1i1}.
\end{proof}

\begin{theorem}\label{Cat_series3}
We have
\begin{equation}
\sum_{n = 0}^\infty \frac{2n+1}{n+1} \frac{C_n}{2^{2n}} = 4\ln 2,
\end{equation}
\begin{equation}
\sum_{n = 0}^\infty \frac{2n+1}{(n+1)(n+2)} \frac{C_n}{2^{2n}} = 4\ln 2 - 2,
\end{equation}
and more generally, if $z\in\mathbb{C}$ with $z - 3/2 \not\in\mathbb Z^{-}$, then
\begin{equation}\label{eq.cat_series3}
\sum_{n = 0}^\infty \frac{2n+1}{2^{2n}} \frac{C_n}{\binom{{n + z}}{n}} = 2z \left( H_{z - 1} - H_{z - 3/2} \right).
\end{equation}
\end{theorem}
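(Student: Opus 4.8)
The plan is to obtain \eqref{eq.cat_series3} in exactly the same way as Theorem~\ref{Cat_series1}, but with the mirror choice of half-integer parameter. First I would set $h=-1/2$ in \eqref{psi_expr4}: this is the value that puts a factor $\Gamma(n+3/2)=(n+1/2)!$ in the numerator of the general term, just as $h=1/2$ put $(n-1/2)!$ there in Theorem~\ref{Cat_series1}. With $h=-1/2$ the left-hand side of \eqref{psi_expr4} becomes $H_{z-3/2}-H_{z-1}$ and the right-hand side becomes $-\dfrac{\Gamma(z)}{\Gamma(1/2)}\sum_{n=0}^\infty\dfrac{\Gamma(n+3/2)}{(n+1)\,\Gamma(n+1+z)}$.

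Next I would convert everything to factorials. Using $\Gamma(w)=(w-1)!$, $\Gamma(1/2)=\sqrt\pi$, and Lemma~\ref{lem.iyajmzy} with $r=n+1$ to write $(n+1/2)!=\dfrac{\sqrt\pi}{2^{2n+2}}\binom{2n+2}{n+1}(n+1)!$, the one identity worth isolating is $\binom{2n+2}{n+1}=\dfrac{2(2n+1)}{n+1}\binom{2n}{n}$, which recasts this as $(n+1/2)!=\dfrac{\sqrt\pi\,(2n+1)}{2^{2n+1}}\binom{2n}{n}\,n!$. Substituting back, the $\sqrt\pi$'s cancel; using $\binom{2n}{n}/(n+1)=C_n$, then $n!/(n+z)!=1/\!\big(z!\,\binom{n+z}{n}\big)$, and finally $\Gamma(z)/z!=1/z$, the right-hand side collapses to $-\dfrac{1}{2z}\displaystyle\sum_{n=0}^\infty\dfrac{(2n+1)\,C_n}{2^{2n}\binom{n+z}{n}}$. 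Equating this with $H_{z-3/2}-H_{z-1}$ and solving for the series gives \eqref{eq.cat_series3}. The two displayed special cases then drop out: at $z=1$ one has $\binom{n+1}{n}=n+1$ and the right-hand side is $2\big(H_0-H_{-1/2}\big)=4\ln2$ because $H_{-1/2}=-2\ln2$ by Lemma~\ref{lem.ho} with $k=0$; at $z=2$ one has $\binom{n+2}{n}=(n+1)(n+2)/2$ and the right-hand side is $4\big(H_1-H_{1/2}\big)=4\ln2-2$ because $H_{1/2}=H_{-1/2}+2=2-2\ln2$.

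I expect the only genuinely delicate issue to be the admissible range of $z$, not the algebra. Identity \eqref{psi_expr3}, hence \eqref{psi_expr4}, is being used with $h=-1/2$, which formally needs $\Re(z)>0$ and $\Re(z-1/2)>0$, i.e.\ $\Re(z)>1/2$; and the general term $(2n+1)C_n/\!\big(2^{2n}\binom{n+z}{n}\big)$ is asymptotically a constant times $n^{-1/2-\Re(z)}$, so the series of \eqref{eq.cat_series3} converges exactly for $\Re(z)>1/2$. I would therefore prove \eqref{eq.cat_series3} first for $\Re(z)>1/2$ and then extend it by the identity theorem, both sides being meromorphic in $z$ with singularities only where $H_{z-3/2}$ is undefined, to all $z$ with $z-3/2\notin\mathbb Z^{-}$. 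That analytic-continuation step is the one place where care is needed; the rest is routine bookkeeping with Lemmas~\ref{lem.ho} and~\ref{lem.iyajmzy}.
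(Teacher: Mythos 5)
Your proposal is correct and follows the paper's own (very terse) proof exactly: set $h=-1/2$ in \eqref{psi_expr4}, convert the Gamma factors to central binomial coefficients via Lemma~\ref{lem.iyajmzy} and $\Gamma(1/2)=\sqrt\pi$, and read off \eqref{eq.cat_series3}; your discussion of the convergence region $\Re(z)>1/2$ and the analytic-continuation step is in fact more careful than the paper's. One small bookkeeping slip in your second special case: at $z=2$ the right-hand side of \eqref{eq.cat_series3} is $4(H_1-H_{1/2})=8\ln 2-4$, not $4\ln2-2$, and the sum displayed in the theorem is half of this because $\binom{n+2}{n}=(n+1)(n+2)/2$, which then does give the stated value $4\ln 2-2$.
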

\begin{proof}
The proof is very similar to the proof of Theorem \ref{Cat_series1}. Set $h=-1/2$ in \eqref{psi_expr3} and use $\Gamma(1/2)=\sqrt{\pi}$.
\end{proof}

\begin{corollary}\label{Cat_series4}
If $z\in\mathbb{C}$ with $z - 3/2\not\in\mathbb Z^{-}$, then
\begin{equation}\label{eq.cat_series4}
\sum_{n = 0}^\infty \frac{n}{2^{2n}} \frac{C_n}{\binom{{n + z}}{n}} = 2z H_{z - 1} - z\left( H_{z - 1/2} + H_{z - 3/2} \right).
\end{equation}
\end{corollary}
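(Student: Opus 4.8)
The plan is to obtain \eqref{eq.cat_series4} as a simple linear combination of two series already established, namely \eqref{eq.n0r1wj4} from Theorem~\ref{Cat_series1} and \eqref{eq.cat_series3} from Theorem~\ref{Cat_series3}. The key algebraic observation is that $n=\tfrac12\bigl((2n+1)-1\bigr)$, so that
\begin{equation*}
\frac{n}{2^{2n}}\frac{C_n}{\binom{n+z}{n}}=\frac12\left(\frac{2n+1}{2^{2n}}\frac{C_n}{\binom{n+z}{n}}-\frac{1}{2^{2n}}\frac{C_n}{\binom{n+z}{n}}\right).
\end{equation*}

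First I would fix $z$ in a half-plane where all three series converge absolutely, so that the series $\sum_n n\,C_n/\bigl(2^{2n}\binom{n+z}{n}\bigr)$ splits term by term into the difference of the series in \eqref{eq.cat_series3} and \eqref{eq.n0r1wj4}. Substituting the closed forms, \eqref{eq.cat_series3} contributes $2z\bigl(H_{z-1}-H_{z-3/2}\bigr)$, while subtracting the copy of \eqref{eq.n0r1wj4} contributes $+2z\bigl(H_{z-1}-H_{z-1/2}\bigr)$; adding these and dividing by $2$ yields $z\bigl(2H_{z-1}-H_{z-3/2}-H_{z-1/2}\bigr)=2zH_{z-1}-z\bigl(H_{z-1/2}+H_{z-3/2}\bigr)$, which is the asserted identity. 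Since both sides are meromorphic functions of $z$ agreeing on an open set, the identity then extends to all $z$ with $z-3/2\notin\mathbb Z^{-}$ by analytic continuation.

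There is no genuine obstacle here; the only point demanding a little care is the bookkeeping of domains — checking that the hypotheses under which \eqref{eq.n0r1wj4} and \eqref{eq.cat_series3} were proved are jointly satisfiable on a nonempty open set, and either justifying the term-by-term splitting there or, equivalently, invoking analytic continuation to reconcile the slightly different stated ranges of $z$ in the two source results.
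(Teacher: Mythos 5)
Your proposal is correct and is exactly the paper's argument: the paper's proof consists of the single line ``Combine Theorems \ref{Cat_series1} and \ref{Cat_series3},'' i.e.\ the same decomposition $n=\tfrac12((2n+1)-1)$ applied termwise to \eqref{eq.cat_series3} and \eqref{eq.n0r1wj4}. Your additional remarks on convergence domains and analytic continuation are sensible bookkeeping that the paper leaves implicit.
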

\begin{proof}
Combine Theorems \ref{Cat_series1} and \ref{Cat_series3}.
\end{proof}

\begin{remark}
We have
\begin{equation}
\sum_{n = 0}^\infty \frac{n}{(n+1)} \frac{C_n}{2^{2n}} = \sum_{n = 0}^\infty \frac{2n+1}{(n+1)(n+2)} \frac{C_n}{2^{2n}} = 4\ln 2 - 2.
\end{equation}
\end{remark}

\begin{theorem}\label{Cat_series5}
If $z\in\mathbb{C}$ with $z - 3/2\not\in\mathbb Z^{-}$, then
\begin{equation}\label{eq.cat_series5}
\sum_{n = 0}^\infty \frac{2n+1}{2^{2n}} \frac{C_n H_{n+z}}{\binom{{n + z}}{n}} = 2(z H_z - 1) \left( H_{z - 1} - H_{z - 3/2} \right)
+ 2z \left( H_{z - 1}^{(2)} - H_{z - 3/2}^{(2)} \right).
\end{equation}
In particular,
\begin{equation}
\sum_{n = 0}^\infty \frac{2n+1}{n+1} \frac{C_n H_{n+1}}{2^{2n}} = \frac{2 \pi^2}{3},
\end{equation}
\begin{equation}
\sum_{n = 0}^\infty \frac{2n+1}{(n+2)(n+1)} \frac{C_n H_{n+2}}{2^{2n}} = \frac{2 \pi^2}{3} + 4 \ln 2 - 8,
\end{equation}
and
\begin{equation}\label{eq.nr7v570}
\sum_{n = 0}^\infty \frac{O_{n + 2}}{(n + 1)(2n + 3)} = 4 - 2\ln 2 - \frac{\pi^2}{6}.
\end{equation}
\end{theorem}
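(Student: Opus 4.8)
The plan is to follow the pattern of Theorem~\ref{Cat_series2} and differentiate the closed form of Theorem~\ref{Cat_series3} with respect to the parameter $z$. Writing $1/\binom{n+z}{n} = z!\,n!/(n+z)!$ and using $\psi(z+1)-\psi(n+z+1) = H_z-H_{n+z}$, which follows from~\eqref{Harm_psi}, one obtains
\[
\frac{d}{dz}\,\frac{1}{\binom{n+z}{n}} = \frac{H_z-H_{n+z}}{\binom{n+z}{n}}.
\]
The series in~\eqref{eq.cat_series3} converges uniformly on compact subsets of $\{z:\Re z>1/2\}$ (its general term is $O(n^{-1/2-\Re z})$), so it represents a holomorphic function there and may be differentiated term by term. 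Differentiating~\eqref{eq.cat_series3} and splitting the resulting left-hand side into an $H_z$-part and an $H_{n+z}$-part yields
\[
H_z\sum_{n=0}^\infty\frac{2n+1}{2^{2n}}\frac{C_n}{\binom{n+z}{n}}-\sum_{n=0}^\infty\frac{2n+1}{2^{2n}}\frac{C_nH_{n+z}}{\binom{n+z}{n}}=\frac{d}{dz}\Bigl(2z\,(H_{z-1}-H_{z-3/2})\Bigr).
\]

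For the right-hand side I use $H_{z-1}=\psi(z)+\gamma$ and $H_{z-3/2}=\psi(z-1/2)+\gamma$ together with the polygamma--harmonic relation, which gives $H_{z-1}^{(2)}=\zeta(2)-\psi^{(1)}(z)$ and $H_{z-3/2}^{(2)}=\zeta(2)-\psi^{(1)}(z-1/2)$, hence
\[
\frac{d}{dz}\Bigl(2z\,(H_{z-1}-H_{z-3/2})\Bigr)=2(H_{z-1}-H_{z-3/2})-2z\bigl(H_{z-1}^{(2)}-H_{z-3/2}^{(2)}\bigr).
\]
Evaluating the first sum on the left by~\eqref{eq.cat_series3} once more and solving for the remaining sum produces~\eqref{eq.cat_series5}; the extension to all $z$ with $z-3/2\notin\mathbb Z^{-}$ is by analytic continuation. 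The only genuinely delicate point is this term-by-term differentiation / convergence bookkeeping; everything else is algebra.

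The three special cases are then substitutions. At $z=1$ one has $\binom{n+1}{n}=n+1$ and $zH_z-1=0$, so the first term of~\eqref{eq.cat_series5} drops out; with $H_0^{(2)}=0$ and $H_{-1/2}^{(2)}=-2\zeta(2)$ from Lemma~\ref{lem.ho}, the value is $2\pi^2/3$. At $z=2$ one has $1/\binom{n+2}{n}=2/((n+1)(n+2))$, so~\eqref{eq.cat_series5} gives exactly twice the stated series; inserting $H_1-H_{1/2}=2\ln2-1$ and $H_1^{(2)}-H_{1/2}^{(2)}=\pi^2/3-3$ (again Lemma~\ref{lem.ho}) gives $\tfrac{2\pi^2}{3}+4\ln2-8$. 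For~\eqref{eq.nr7v570} I set $z=3/2$; applying the formula for $(r-1/2)!$ from Lemma~\ref{lem.iyajmzy} with $r=n+2$ and $r=2$, and simplifying the resulting ratio of central binomial coefficients, gives the clean reduction
\[
\frac{C_n}{2^{2n}\binom{n+3/2}{n}}=\frac{3}{(n+1)(2n+1)(2n+3)},
\]
so the $n$-th term of~\eqref{eq.cat_series5} collapses to $3H_{n+3/2}/((n+1)(2n+3))$. Using $H_{n+3/2}=2O_{n+2}-2\ln2$ (Lemma~\ref{lem.ho}) and the elementary evaluation $\sum_{n\ge0}1/((n+1)(2n+3))=H_{1/2}=2-2\ln2$, one isolates $\sum_{n\ge0}O_{n+2}/((n+1)(2n+3))$; the $\ln2$ and $\ln^2 2$ contributions on the two sides cancel, leaving $4-2\ln2-\pi^2/6$, which is~\eqref{eq.nr7v570}.
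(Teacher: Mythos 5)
Your proof is correct and takes essentially the same route as the paper: differentiate the closed form \eqref{eq.cat_series3} of Theorem~\ref{Cat_series3} with respect to $z$, use \eqref{eq.cat_series3} a second time to remove the $H_z$ part, and then specialize at $z=1$, $z=2$ and $z=3/2$ via Lemma~\ref{lem.ho}. The convergence and term-by-term differentiation bookkeeping you supply (and your verified numerics for the three special cases) only fill in details the paper leaves implicit.
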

\begin{proof}
Differentiate \eqref{eq.cat_series3} with respect to $z$ and simplify. Identity~\eqref{eq.nr7v570} is obtained 
by setting $z=3/2$ in~\eqref{eq.cat_series5} and using Lemma~\ref{lem.ho}.
\end{proof}

\begin{theorem}\label{Cat_series6}
If $z\in\mathbb{C}$ with $z - 3/2\not\in\mathbb Z^{-}$, then
\begin{align}\label{eq.cat_series6}
\sum_{n = 0}^\infty \frac{n}{2^{2n}} \frac{C_n H_{n+z}}{\binom{{n + z}}{n}}
&= (z H_z - 1) \left( 2 H_{z - 1} - H_{z - 1/2} - H_{z - 3/2} \right) \nonumber \\
&\qquad + z \left( 2 H_{z - 1}^{(2)} - H_{z - 1/2}^{(2)} - H_{z - 3/2}^{(2)} \right).
\end{align}
In particular, at $z=1$ and at $z=3/2$ we obtain
\begin{equation}
\sum_{n = 1}^\infty \frac{n C_n}{2^{2n}}\,\frac{H_{n + 1}}{n + 1} = \frac{2\pi^2}{3} - 4
\end{equation}
and
\begin{equation}
\sum_{n = 1}^\infty \frac{n O_{n + 2}}{(n + 1)(2n + 1)(2n + 3)} = \frac{13}{4} - 2\ln 2 - \frac{\pi^2}{6}.
\end{equation}
\end{theorem}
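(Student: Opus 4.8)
The plan is to obtain~\eqref{eq.cat_series6} by combining the two closed forms already in hand, in the same spirit in which Corollary~\ref{Cat_series4} was deduced from Theorems~\ref{Cat_series1} and~\ref{Cat_series3}. Writing $n=\tfrac12\bigl[(2n+1)-1\bigr]$ term by term gives
\begin{equation*}
\sum_{n=0}^\infty\frac{n}{2^{2n}}\frac{C_nH_{n+z}}{\binom{n+z}{n}}
=\frac12\sum_{n=0}^\infty\frac{2n+1}{2^{2n}}\frac{C_nH_{n+z}}{\binom{n+z}{n}}
-\frac12\sum_{n=0}^\infty\frac{C_n}{2^{2n}}\frac{H_{n+z}}{\binom{n+z}{n}},
\end{equation*}
so that the first sum on the right is supplied by~\eqref{eq.cat_series5} and the second by~\eqref{eq.ue5hg8p}. (Equivalently, one may differentiate~\eqref{eq.cat_series4} with respect to $z$, using $\frac{d}{dz}\binom{n+z}{n}^{-1}=\binom{n+z}{n}^{-1}(H_z-H_{n+z})$ together with $\frac{d}{dz}H_{z-1}=\zeta(2)-H_{z-1}^{(2)}$ and the analogous identities for the half-integer shifts; this is the route used for Theorem~\ref{Cat_series5}.)

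Inserting the two closed forms and clearing the factor $\tfrac12$, the only mild subtlety is the bookkeeping of the mixed harmonic terms: using $(H_{z-1}-H_{z-1/2})(1-zH_z)=-(zH_z-1)(H_{z-1}-H_{z-1/2})$ the two ``$(1-zH_z)$'' and ``$(zH_z-1)$'' contributions line up with the same sign, and after collecting the order-$2$ pieces one is left precisely with $(zH_z-1)\bigl(2H_{z-1}-H_{z-1/2}-H_{z-3/2}\bigr)+z\bigl(2H_{z-1}^{(2)}-H_{z-1/2}^{(2)}-H_{z-3/2}^{(2)}\bigr)$, which is~\eqref{eq.cat_series6}.

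For the two displayed specializations one substitutes and simplifies. At $z=1$ we have $\binom{n+1}{n}=n+1$, the $n=0$ term vanishes, and inserting $H_0=0$, $H_{1/2}=2-2\ln2$, $H_{-1/2}=-2\ln2$, $H_0^{(2)}=0$, $H_{1/2}^{(2)}=4-2\zeta(2)$, $H_{-1/2}^{(2)}=-2\zeta(2)$ from Lemma~\ref{lem.ho} collapses the right side to $4\zeta(2)-4=\tfrac{2\pi^2}{3}-4$. At $z=3/2$ one uses Lemma~\ref{lem.iyajmzy} to rewrite $\binom{n+3/2}{n}^{-1}$, exploiting $\binom{2n}{n}\big/\binom{2(n+2)}{n+2}=\tfrac{(n+1)(n+2)}{4(2n+1)(2n+3)}$, and Lemma~\ref{lem.ho} to replace $H_{n+3/2}$ by $2O_{n+2}-2\ln2$; the left-hand series then becomes $6\sum_{n\ge1}\frac{nO_{n+2}}{(n+1)(2n+1)(2n+3)}$ minus $6\ln2\sum_{n\ge1}\frac{n}{(n+1)(2n+1)(2n+3)}$, and a partial-fraction computation gives $\sum_{n\ge1}\frac{n}{(n+1)(2n+1)(2n+3)}=\tfrac32-2\ln2$. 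On the right, the substitutions $H_{1/2},H_1,H_0,H_{3/2}$ and their order-$2$ analogues from Lemma~\ref{lem.ho} produce a $12\ln^2 2$ term which cancels the $12\ln^2 2$ arising from $-6\ln2\cdot(\tfrac32-2\ln2)$ on the left; solving for the $O$-series then yields $\tfrac{13}{4}-2\ln2-\tfrac{\pi^2}{6}$.

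These computations are entirely routine; the only place demanding care is this last step, namely correctly extracting the elementary tail $6\ln2\sum_{n\ge1}\frac{n}{(n+1)(2n+1)(2n+3)}$ so that the spurious $\ln^2 2$ terms cancel and the clean constant $\tfrac{13}{4}-2\ln2-\tfrac{\pi^2}{6}$ survives.
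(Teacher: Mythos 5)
Your proposal is correct: the linear combination of \eqref{eq.cat_series5} and \eqref{eq.ue5hg8p} via $n=\tfrac12[(2n+1)-1]$ does collapse to \eqref{eq.cat_series6}, and the specializations check out (including $\sum_{n\ge1}\frac{n}{(n+1)(2n+1)(2n+3)}=\tfrac32-2\ln2$ and the cancellation of the $12\ln^2 2$ terms at $z=3/2$). The paper's proof is precisely your parenthetical alternative---differentiate \eqref{eq.cat_series4} with respect to $z$ and simplify---and since differentiation commutes with the linear combination that produced \eqref{eq.cat_series4} from Theorems \ref{Cat_series1} and \ref{Cat_series3}, your primary route is the same computation performed in the opposite order.
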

\begin{proof}
Differentiate \eqref{eq.cat_series4} with respect to $z$ and simplify.
\end{proof}

Note that we used
\begin{equation*}
\binom{{n + 3/2}}{n} = \frac{(2n + 1)(2n + 3)}{2^{2n}\,3}\binom{{2n}}{n}.
\end{equation*}

\section{More new series associated with \eqref{psi_expr3} }

\begin{theorem}
We have
\begin{equation}\label{eq.pm4exox}
\sum_{n = 0}^\infty \frac{{2^{2n} }}{{\binom{{2\left( {n + 1} \right)}}{{n + 1}}\left( {n + 1} \right)^2 }} = \frac{\pi^2}{8},
\end{equation}
\begin{equation}\label{eq.gujh5lu}
\sum_{n = 0}^\infty \frac{1}{{\left( {2n + 1} \right)\left( {n + 1} \right)}} = 2\ln 2,
\end{equation}
and, more generally, for $h\not\in\mathbb Z$, $2h\not\in\mathbb Z^{-}$,
\begin{equation}\label{eq.giggzhv}
\sum_{n = 0}^\infty \frac{2^{2n}}{\binom{{n}}{h}\binom{{2\left( {n + 1} \right)}}{{\left( {n + 1} \right)}}\left( {n + 1} \right)^2}
= \frac{\pi }{4}\,\frac{H_{h - 1/2} - H_{- 1/2}}{\sin(\pi h)}.
\end{equation}
\end{theorem}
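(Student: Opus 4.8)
The plan is to specialize the harmonic-number form \eqref{psi_expr4} of N\"orlund's identity at $z=1/2$ and then recognize the resulting quotient of Gamma functions as the reciprocal binomial expression appearing in \eqref{eq.giggzhv}. Putting $z=1/2$ in \eqref{psi_expr4} gives
\begin{equation*}
H_{h - 1/2} - H_{-1/2} = -\,\frac{\Gamma(1/2)}{\Gamma(-h)}\sum_{n=0}^\infty \frac{\Gamma(n+1-h)}{(n+1)\,\Gamma(n+3/2)},
\end{equation*}
valid a priori whenever $\Re(1/2)>0$ and $\Re(1/2+h)>0$, i.e. $\Re(h)>-1/2$.

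Next I would convert the summand to binomials. Using $\binom{n}{h}=\Gamma(n+1)/\bigl(\Gamma(h+1)\Gamma(n+1-h)\bigr)$ together with the first identity of Lemma~\ref{lem.iyajmzy} (equivalently, the Legendre duplication formula) in the shape $\Gamma(n+3/2)=(n+1/2)! = \sqrt\pi\,2^{-2n-2}\binom{2(n+1)}{n+1}(n+1)!$, one gets
\begin{equation*}
\frac{\Gamma(n+1-h)}{(n+1)\,\Gamma(n+3/2)} = \frac{2^{2n+2}}{\sqrt\pi\,\Gamma(h+1)}\cdot\frac{1}{\binom{n}{h}\binom{2(n+1)}{n+1}(n+1)^2}.
\end{equation*}
Substituting this and cancelling $\Gamma(1/2)=\sqrt\pi$ reduces the right-hand side to $-\,4\bigl(\Gamma(-h)\Gamma(h+1)\bigr)^{-1}$ times the series in \eqref{eq.giggzhv}. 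Finally the reflection formula, $\Gamma(-h)\Gamma(1+h)=\pi/\sin(-\pi h)=-\pi/\sin(\pi h)$, turns the constant into $4\sin(\pi h)/\pi$, and solving for the series yields \eqref{eq.giggzhv}.

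For the two special cases: to get \eqref{eq.gujh5lu} I would set $h=1/2$, whereupon the left side collapses (via $(n-1/2)!/(n+1/2)!=1/(n+1/2)$ and $(1/2)!=\sqrt\pi/2$) to $\tfrac{\pi}{4}\sum_{n\ge0}\frac{1}{(2n+1)(n+1)}$, while the right side is $\tfrac{\pi}{4}\bigl(H_0-H_{-1/2}\bigr)=\tfrac{\pi}{4}\cdot 2\ln2$ by Lemma~\ref{lem.ho}. To get \eqref{eq.pm4exox} I would let $h\to0$ in \eqref{eq.giggzhv}: since $\binom{n}{0}=1$ the left side tends termwise to $\sum_{n\ge0}2^{2n}/\bigl(\binom{2(n+1)}{n+1}(n+1)^2\bigr)$, and the right side is the $0/0$ limit $\tfrac{\pi}{4}\lim_{h\to0}\frac{H_{h-1/2}-H_{-1/2}}{\sin(\pi h)}=\tfrac{\pi}{4}\cdot\frac{\psi'(1/2)}{\pi}=\tfrac{\pi}{4}\cdot\frac{\pi^2/2}{\pi}=\frac{\pi^2}{8}$ by L'H\^opital.

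There is no deep analytic difficulty here; the steps that genuinely need care are the Gamma-to-binomial bookkeeping in the second paragraph, the verification that the specialization $z=1/2$ is admissible in \eqref{psi_expr4} (requiring $\Re(h)>-1/2$) followed by an analytic-continuation argument in $h$ to reach the full stated range $2h\notin\mathbb Z^-$, and the justification of the termwise passage to the limit $h\to0$ used for \eqref{eq.pm4exox}. I expect the continuation/convergence discussion to be the only part worth more than a line.
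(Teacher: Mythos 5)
Your proposal is correct and follows essentially the same route as the paper: specialize \eqref{psi_expr4} at $z=1/2$, rewrite the Gamma quotients as binomial coefficients via the duplication formula of Lemma~\ref{lem.iyajmzy}, apply Euler's reflection formula to $\Gamma(-h)\Gamma(1+h)$, and obtain \eqref{eq.gujh5lu} at $h=1/2$ and \eqref{eq.pm4exox} by the L'H\^opital limit $h\to 0$ with $\psi^{(1)}(1/2)=\pi^2/2$. The only differences are notational (Gamma functions versus the paper's factorial bookkeeping) and your explicit mention of the analytic continuation in $h$, which the paper leaves implicit.
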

\begin{proof}
Set $z=1/2$ in~\eqref{psi_expr4} and express as factorials to obtain
\begin{equation*}
\sum_{n = 0}^\infty \frac{{\left( {n - h} \right)!h!}}{{n!}}\frac{{n!\sqrt \pi}}{{\left( {n + 1/2} \right)!}}\frac{1}{{n + 1}}
= h!\left( {- 1 - h} \right)!\left( {H_{- 1/2}  - H_{h - 1/2} } \right),
\end{equation*}
from which~\eqref{eq.giggzhv} follows upon using Lemma~\ref{lem.iyajmzy} and the Euler reflection formula:
\begin{equation*}
\left( { - r} \right)!\left( {r - 1} \right)! = \frac{\pi}{\sin(\pi r)},\quad r\not\in\mathbb Z.
\end{equation*}
Identities~\eqref{eq.pm4exox} and~\eqref{eq.gujh5lu} are special cases of~\eqref{eq.giggzhv} at $h=0$ and $h=1/2$.
Note that in deriving~\eqref{eq.pm4exox}, we used L'Hospital's rule:
\begin{equation*}
\begin{split}
\lim_{h\to 0}\frac{H_{h - 1/2} - H_{- 1/2}}{\sin(\pi h)} &= \lim_{h\to 0}\frac{{\psi \left( {h + 1/2} \right) - \psi \left( {1/2} \right)}}{\sin (\pi h)} \\
&= \lim_{h\to 0}\frac{\psi^{(1)} \left( {h + 1/2} \right)}{\pi \cos(\pi h)} = \frac{\pi}{2}.
\end{split}
\end{equation*}
\end{proof}

\begin{corollary}
If $h$ is a non-negative integer, then
\begin{equation}\label{eq.t7z1d41}
\sum_{n = h}^\infty \frac{{\binom{{2\left( {n - h} \right)}}{{n - h}}}}{{\binom{{n}}{h}\binom{{2\left( {n + 1} \right)}}{{\left( {n + 1} \right)}}\left( {n + 1} \right)^2 }} = (- 1)^h \frac{H_h + 2\ln 2}{\left( {h + 1} \right)\binom{{2\left( {h + 1} \right)}}{{h + 1}}}
- \sum_{n = 0}^{h - 1} \frac{(- 1)^{n - h} \binom{{h}}{n}}{\binom{{2\left( {h - n} \right)}}{{\left( {h - n} \right)}}\binom{{2\left( {n + 1} \right)}}{{n + 1}}\left( {n + 1} \right)^2 }.
\end{equation}
\end{corollary}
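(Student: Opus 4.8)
I would derive the corollary by specializing the identity behind the preceding theorem — namely \eqref{psi_expr4} — at a half-integer shift. Concretely, in \eqref{psi_expr4} I would set $z=1/2$ and replace $h$ by $h+1/2$ (equivalently, put $h\mapsto h+1/2$ in \eqref{eq.giggzhv}, which is legitimate because $h+1/2\notin\mathbb Z$ and $2h+1\notin\mathbb Z^{-}$ for a non-negative integer $h$). After rearranging this reads
\[
\sum_{n=0}^\infty\frac{\Gamma(n-h+1/2)}{(n+1)\,\Gamma(n+3/2)} = -\frac{\Gamma(-h-1/2)}{\sqrt{\pi}}\,\bigl(H_h-H_{-1/2}\bigr),
\]
and by Lemma~\ref{lem.ho} (taken at $k=0$) one has $H_{-1/2}=-2\ln 2$, so the bracket is $H_h+2\ln 2$.

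The heart of the argument is to recognize the summand, after multiplication by $\Gamma(h+1)/4^{h+1}$, as a ratio of binomial coefficients — in two different guises according to whether $n\ge h$ or $n<h$. For $n\ge h$ I would write $\Gamma(n-h+1/2)=(n-h-1/2)!$ and $\Gamma(n+3/2)=(n+1/2)!$ and apply the first identity of Lemma~\ref{lem.iyajmzy} with $r=n-h$ and with $r=n+1$; cancelling powers of $4$ and factorials turns $\frac{\Gamma(h+1)}{4^{h+1}}\cdot\frac{\Gamma(n-h+1/2)}{(n+1)\Gamma(n+3/2)}$ into exactly $\frac{\binom{2(n-h)}{n-h}}{\binom{n}{h}\binom{2(n+1)}{n+1}(n+1)^2}$. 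For $0\le n\le h-1$ the argument $n-h+1/2$ is a negative half-integer, so instead I would apply the second identity of Lemma~\ref{lem.iyajmzy} with $r=h-n$ to $\Gamma(n-h+1/2)$; the same bookkeeping produces $\frac{(-1)^{n-h}\binom{h}{n}}{\binom{2(h-n)}{h-n}\binom{2(n+1)}{n+1}(n+1)^2}$, the correction terms in the statement. Finally, since $\Gamma(-h-1/2)=(-h-3/2)!$ in the paper's factorial convention, the second identity of Lemma~\ref{lem.iyajmzy} with $r=h+1$ gives
\[
-\frac{\Gamma(h+1)\,\Gamma(-h-1/2)}{4^{h+1}\sqrt{\pi}} = \frac{(-1)^h}{(h+1)\binom{2(h+1)}{h+1}}.
\]

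Putting the three computations together, the identity from the first step becomes
\[
\sum_{n=0}^{h-1}\frac{(-1)^{n-h}\binom{h}{n}}{\binom{2(h-n)}{h-n}\binom{2(n+1)}{n+1}(n+1)^2} + \sum_{n=h}^\infty\frac{\binom{2(n-h)}{n-h}}{\binom{n}{h}\binom{2(n+1)}{n+1}(n+1)^2} = (-1)^h\frac{H_h+2\ln 2}{(h+1)\binom{2(h+1)}{h+1}},
\]
and transposing the finite sum yields \eqref{eq.t7z1d41}. Convergence of the series and applicability of \eqref{psi_expr4} at these parameters are both covered by $\Re(h)>-1$. I expect the only real obstacle to be organizational: one must keep the three invocations of Lemma~\ref{lem.iyajmzy} straight — in particular noticing that $\Gamma(-h-1/2)$ corresponds to $r=h+1$ and not $r=h$ — and carefully track the signs coming from the negative half-integer values of $\Gamma$ when $n<h$; there is no analytic subtlety beyond the results already in hand.
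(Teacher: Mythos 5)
Your proposal is correct and follows essentially the same route as the paper: substituting $h\mapsto h+1/2$ into \eqref{eq.giggzhv} (equivalently, $z=1/2$, $h\mapsto h+1/2$ in \eqref{psi_expr4}) and then splitting the sum at $n=h$ according to the sign of the half-integer argument. The only cosmetic difference is that the paper packages the two-case evaluation of $\binom{n}{h+1/2}$ into the single identity \eqref{eq.ii21j6e}, whereas you carry out the equivalent computation directly from Lemma~\ref{lem.iyajmzy}; your bookkeeping (including $\Gamma(-h-1/2)$ corresponding to $r=h+1$) checks out.
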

\begin{proof}
Write $h+1/2$ for $h$ in~\eqref{eq.giggzhv} and use the fact that if $n$ and $h$ are non-negative integers, then
\begin{equation}\label{eq.ii21j6e}
\binom{{n}}{{h + 1/2}} = \frac{{2^{2n + 2} }}{{\pi \left( {h + 1} \right)}}
\begin{cases}
 \binom{{n}}{h}\binom{{2\left( {n - h} \right)}}{{n - h}}^{ - 1} \binom{{2\left( {h + 1} \right)}}{{h + 1}}^{ - 1},&\text{if $n\ge h$;}  \\
 \\
 (-1)^{h-n}\binom{{2\left( {h - n} \right)}}{{h - n}}\binom{{h}}{n}^{ - 1} \binom{{2\left( {h + 1} \right)}}{{h + 1}}^{ - 1}&\text{if $n\le h$;}  \\
 \end{cases}
\end{equation}
since
\begin{equation*}
\binom{{n}}{{h + 1/2}} =  \begin{cases}
 \frac{{n!}}{{\left( {n + 1/2} \right)!\left( {n - h - 1/2} \right)!}}\,, &\text{if $n\ge h$;}\\
 \frac{{n!}}{{\left( {n + 1/2} \right)!\left( { - \left( {h - n} \right) - 1/2} \right)!}}\,, &\text{if $n\le h$.}\\
 \end{cases}
\end{equation*}
\end{proof}

\begin{theorem}
If $h$ is a non-negative integer, then
\begin{equation}\label{last_eq}
\begin{split}
\sum_{n = h}^\infty  {\frac{{\binom{{2\left( {n - h} \right)}}{{n - h}}}}{{\binom{{n}}{h}\binom{{2\left( {n + 1} \right)}}{{n + 1}}}}\frac{{O_{n - h} }}{{\left( {n + 1} \right)^2 }}}  &=  - \left( { - 1} \right)^h \frac{{\zeta \left( 2 \right) - H_h^{(2)} }}{{2\left( {h + 1} \right)\binom{{2\left( {h + 1} \right)}}{{h + 1}}}} + \left( { - 1} \right)^h \frac{{H_h  + 2\ln 2}}{{\left( {h + 1} \right)\binom{{2\left( {h + 1} \right)}}{{h + 1}}}}{O_{h + 1}}\\
&\qquad\qquad - \sum_{n = 0}^{h - 1} {\frac{{\left( { - 1} \right)^{n - h} \binom{{h}}{n}}}{{\binom{{2\left( {h - n} \right)}}{{h - n}}\binom{{2\left( {n + 1} \right)}}{{n + 1}}}}\frac{{O_{h - n} }}{{\left( {n + 1} \right)^2 }}} .
\end{split}
\end{equation}
\end{theorem}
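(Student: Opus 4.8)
The plan is to derive \eqref{last_eq} from the general identity \eqref{eq.giggzhv} by differentiating in the parameter $h$ and then specialising, in the spirit of the derivation of \eqref{eq.t7z1d41} from \eqref{eq.giggzhv}. Since $\binom{n}{h}^{-1}=\Gamma(h+1)\Gamma(n-h+1)/\Gamma(n+1)$, logarithmic differentiation gives $\frac{d}{dh}\binom{n}{h}^{-1}=\binom{n}{h}^{-1}\bigl(\psi(h+1)-\psi(n-h+1)\bigr)=\binom{n}{h}^{-1}(H_h-H_{n-h})$. Differentiating \eqref{eq.giggzhv} termwise — legitimate because the resulting series converges locally uniformly in a neighbourhood of each positive half-integer, where every term is analytic — yields
\[
\sum_{n=0}^{\infty}\frac{2^{2n}(H_h-H_{n-h})}{\binom{n}{h}\binom{2(n+1)}{n+1}(n+1)^2}=\frac{\pi}{4}\,\frac{d}{dh}\left[\frac{\psi(h+1/2)-\psi(1/2)}{\sin\pi h}\right].
\]

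In the resulting identity I would replace $h$ by $h+1/2$ and take $h$ to be a non-negative integer. On the right-hand side, applying the quotient rule and then evaluating the derivative at the shifted argument, the term carrying the factor $\cos\pi(h+1/2)=-\sin\pi h$ vanishes while $\sin\pi(h+1/2)=(-1)^h$, so the right side collapses to $\tfrac{\pi}{4}(-1)^h\psi^{(1)}(h+1)=\tfrac{\pi}{4}(-1)^h\bigl(\zeta(2)-H_h^{(2)}\bigr)$. On the left side I would rewrite $H_{h+1/2}-H_{n-h-1/2}$ with Lemma~\ref{lem.ho}: first $H_{h+1/2}=H_{(h+1)-1/2}=2O_{h+1}-2\ln2$; then for $n\ge h$ one has $H_{n-h-1/2}=2O_{n-h}-2\ln2$ directly, while for $0\le n\le h-1$ one first invokes the reflection $\psi(1/2-m)=\psi(1/2+m)$, valid because $\cot\pi(1/2+m)=0$ for the integer $m=h-n$, to get $H_{n-h-1/2}=H_{h-n-1/2}=2O_{h-n}-2\ln2$. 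Hence $H_{h+1/2}-H_{n-h-1/2}$ equals $2(O_{h+1}-O_{n-h})$ for $n\ge h$ and $2(O_{h+1}-O_{h-n})$ for $n\le h-1$. Finally I would use \eqref{eq.ii21j6e} in both of its cases to replace $\binom{n}{h+1/2}^{-1}$, which splits the series at $n=h$ and pulls the constant $\tfrac{\pi(h+1)}{4}\binom{2(h+1)}{h+1}$ out of every term.

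After dividing through by $\tfrac{\pi(h+1)}{2}\binom{2(h+1)}{h+1}$ the identity takes the form
\[
\sum_{n\ge h}(O_{h+1}-O_{n-h})\,c_n+\sum_{n=0}^{h-1}(O_{h+1}-O_{h-n})\,d_n=\frac{(-1)^h\bigl(\zeta(2)-H_h^{(2)}\bigr)}{2(h+1)\binom{2(h+1)}{h+1}},
\]
where $c_n=\binom{2(n-h)}{n-h}/\bigl(\binom{n}{h}\binom{2(n+1)}{n+1}(n+1)^2\bigr)$ and $d_n=(-1)^{h-n}\binom{h}{n}/\bigl(\binom{2(h-n)}{h-n}\binom{2(n+1)}{n+1}(n+1)^2\bigr)$. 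I would then isolate the terms proportional to $O_{h+1}$: their total coefficient is $\sum_{n\ge h}c_n+\sum_{n=0}^{h-1}d_n$, which is precisely the left side of \eqref{eq.t7z1d41} together with its finite correction sum (note $(-1)^{h-n}=(-1)^{n-h}$), hence equals $(-1)^h(H_h+2\ln2)/\bigl((h+1)\binom{2(h+1)}{h+1}\bigr)$. Substituting this value and solving for $\sum_{n\ge h}c_nO_{n-h}$ gives \eqref{last_eq}.

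The step I anticipate as the main obstacle — or at least the one requiring the most care — is the treatment of $H_{n-h-1/2}$ for $0\le n\le h-1$: it must be carried through the $\psi$-reflection so that it becomes $2O_{h-n}-2\ln2$ with a \emph{positive} index, since only then does the residual finite sum coincide with the one already appearing in \eqref{eq.t7z1d41}, which is what lets that corollary be quoted. The remaining points are routine: justifying the differentiation under the summation sign, and checking that the cotangent-type contribution on the right genuinely drops out once $h$ is taken to be a non-negative integer.
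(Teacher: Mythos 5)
Your proposal is correct and follows essentially the same route as the paper: differentiate \eqref{eq.giggzhv} with respect to $h$, replace $h$ by $h+1/2$ with $h$ a non-negative integer (killing the cotangent-type term and reducing the right side to $\tfrac{\pi}{4}(-1)^h(\zeta(2)-H_h^{(2)})$), convert the half-integer harmonic numbers via Lemma~\ref{lem.ho} and the binomial via \eqref{eq.ii21j6e}, and absorb the $O_{h+1}$ contribution using \eqref{eq.t7z1d41}. The paper's proof is only a sketch of exactly these steps; your write-up supplies the details, including the reflection argument that turns $H_{n-h-1/2}$ into $2O_{h-n}-2\ln 2$ for $n<h$, which the paper leaves implicit.
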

\begin{proof}
Differentiate~\eqref{eq.giggzhv} with respect to $h$ to obtain
\begin{equation*}
\sum_{n = 0}^\infty \frac{2^{2n} \left( H_{n - h} - H_h \right)}{\binom{{n}}{h}\binom{{2(n + 1)}}{{n + 1}} (n + 1)^2}
= \frac{\pi^2 \cos(\pi h)}{4\sin^2 (\pi h)}\left( H_{h - 1/2} - H_{- 1/2} \right) - \frac{\pi}{4\sin(\pi h)}\left(\zeta (2) - H_{h - 1/2}^{(2)}\right).
\end{equation*}
Now write $h + 1/2$ for $h$ and use Lemma~\ref{lem.ho} and identities~\eqref{eq.t7z1d41} and~\eqref{eq.ii21j6e}.
\end{proof}

Note that when $h=0$ we get
\begin{equation}
\sum_{n=0}^\infty \frac{O_n}{(n+1)(2n+1)} = - \frac{\pi^2}{12} + 2 \ln 2,
\end{equation}
which in view of \eqref{eq.nruaw64} also gives
\begin{equation}
\sum_{n=0}^\infty \frac{1}{(n+1)(2n+1)^2} = \frac{\pi^2}{4} - 2 \ln 2.
\end{equation}
When $h=1$ we see that
\begin{equation}
\sum_{n=1}^\infty \frac{O_{n-1}}{(n+1)(2n+1)(2n-1)} = \frac{\pi^2}{36} - \frac{8}{9} \ln 2 + \frac{7}{18}.
\end{equation}

\begin{theorem}
If $m$ is a positive integer, then
\begin{equation}\label{eq.alvbeou}
\sum_{n = 0}^\infty \frac{2^{2n}}{(n + 1)\binom{{n + m}}{m}\binom{2(n + m)}{n + m}} 
= \frac{m}{4}\,\frac{3\,\zeta (2) - 4\,O_{m - 1}^{(2)}}{\binom{2(m - 1)}{m - 1}}.
\end{equation}
In particular,
\begin{equation}
\sum_{n = 0}^\infty \frac{2^{2n}}{(n + 1)^2 \binom{2(n + 1)}{n + 1}} = \frac{\pi^2}{8}.
\end{equation}
\end{theorem}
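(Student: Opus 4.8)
The plan is to recognize the left-hand side as the (degenerate) $h=0$ instance of N\"orlund's identity~\eqref{psi_expr3}. Since setting $h=0$ directly in~\eqref{psi_expr4} annihilates both sides (the factor $\Gamma(-h)$ has a pole), I would instead differentiate~\eqref{psi_expr3} in $h$ and only then specialize.

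First I would bring the summand into a tractable shape. Writing both binomial coefficients as factorials gives $\binom{n+m}{m}\binom{2(n+m)}{n+m} = \frac{(2n+2m)!}{n!\,m!\,(n+m)!}$, and the first identity of Lemma~\ref{lem.iyajmzy}, in the equivalent form $(2N)! = 2^{2N}N!\,(N-\tfrac12)!/\sqrt\pi$ with $N=n+m$, collapses the left-hand side to
\begin{equation*}
\sum_{n=0}^\infty \frac{2^{2n}}{(n+1)\binom{n+m}{m}\binom{2(n+m)}{n+m}} = \frac{\sqrt\pi\,m!}{2^{2m}}\sum_{n=0}^\infty \frac{n!}{(n+1)\,(n+m-\tfrac12)!}.
\end{equation*}
Using $z(z+1)\cdots(z+n) = (n+z)!/(z-1)!$ with $z=m-\tfrac12$, the remaining tail equals $\frac{1}{(m-3/2)!}\,S(m-\tfrac12)$, where $S(z):=\sum_{n\ge0}\frac{n!}{(n+1)\,z(z+1)\cdots(z+n)}$, so everything reduces to evaluating $S$ at a half-integer argument.

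The crux is the evaluation $S(z)=\psi^{(1)}(z)$. I would obtain it by differentiating~\eqref{psi_expr3} with respect to $h$ and letting $h\to0$: the left side gives $\psi^{(1)}(z)$, while on the right, writing the numerator as $h\cdot g_n(h)$ with $g_n(h)=(h-1)(h-2)\cdots(h-n)$, one has $\frac{d}{dh}\big[h\,g_n(h)\big]\big|_{h=0}=g_n(0)=(-1)^n n!$, which cancels the $(-1)^n$ already present and reproduces the $n$-th term of $S(z)$. Term-by-term differentiation is legitimate because~\eqref{psi_expr3} is an identity between functions analytic in $h$ on $\Re(z+h)>0$, with the series converging locally uniformly in $h$; this is the one place where some care is genuinely needed, since the $h\to0$ limit is an indeterminate form in the packaging~\eqref{psi_expr4}.

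It then remains to assemble and simplify. Substituting back,
\begin{equation*}
\sum_{n=0}^\infty \frac{2^{2n}}{(n+1)\binom{n+m}{m}\binom{2(n+m)}{n+m}} = \frac{\sqrt\pi\,m!}{2^{2m}(m-3/2)!}\,\psi^{(1)}\!\left(m-\tfrac12\right);
\end{equation*}
Lemma~\ref{lem.iyajmzy} with $r=m-1$ gives $(m-\tfrac32)! = \frac{\sqrt\pi}{2^{2(m-1)}}\binom{2(m-1)}{m-1}(m-1)!$, which reduces the prefactor to $\frac{m}{4\binom{2(m-1)}{m-1}}$, and the $r=2$ case of the polygamma--harmonic-number relation, $\psi^{(1)}(z+1)=\zeta(2)-H_z^{(2)}$, combined with Lemma~\ref{lem.ho} in the form $H_{k-1/2}^{(2)}=-2\zeta(2)+4O_k^{(2)}$ (here $k=m-1$), yields $\psi^{(1)}(m-\tfrac12)=3\zeta(2)-4O_{m-1}^{(2)}$. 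This is exactly~\eqref{eq.alvbeou}. The special case $m=1$ then follows from $O_0^{(2)}=0$ and $\binom{0}{0}=1$, giving $\tfrac34\zeta(2)=\tfrac{\pi^2}{8}$. Apart from the differentiation step flagged above, the argument is routine factorial bookkeeping.
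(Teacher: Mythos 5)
Your proposal is correct and follows essentially the same route as the paper: both extract the coefficient of $h$ in N\"orlund's identity at $h=0$ (you by differentiating in $h$, the paper equivalently by letting $h\to 0$ against the simple pole of $\Gamma(-h)$) to obtain $\psi^{(1)}(z)=\sum_{n\ge 0}\frac{n!}{(n+1)\,z(z+1)\cdots(z+n)}$, and then evaluate at $z=m-\tfrac12$ with the same factorial bookkeeping via Lemmas~\ref{lem.ho} and~\ref{lem.iyajmzy}. Your explicit attention to justifying the term-by-term differentiation is a small bonus over the paper's presentation.
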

\begin{proof}
Arrange~\eqref{psi_expr4} as
\begin{equation*}
\sum_{n = 0}^\infty \frac{{\Gamma \left( {n + 1 - h} \right)}}{{\left( {n + 1} \right)\Gamma (n + 1 + z)}} 
= \Gamma (- h)\left(H_{z - 1} - H_{z + h - 1} \right)\frac{1}{\Gamma (z)}
\end{equation*}
and take the limit as $h$ approaches zero, using
\begin{equation*}
\lim_{h\to 0}\Gamma (- h)\left( H_{z - 1} - H_{z + h - 1} \right) = \lim_{h\to 0}\Gamma (- h)\left(\psi (z) - \psi (z + h) \right)
= \psi^{(1)} (z) = \zeta (2) - H_{z - 1}^{(2)},
\end{equation*}
to obtain
\begin{equation*}
\sum\limits_{n = 0}^\infty  {\frac{{n!}}{{\left( {n + 1} \right)\Gamma \left( {n + 1 + z} \right)}}}  = \frac{1}{{\Gamma \left( z \right)}}\left( {\zeta \left( 2 \right) - H_{z - 1}^{(2)} } \right).
\end{equation*}
Now replace $z$ with $z-1/2$ and use
\begin{equation}\label{eq.budvepa}
\Gamma \left( {r + 1/2} \right) = \binom{{r - 1/2}}{r}r!\sqrt \pi  ,
\end{equation}
to obtain
\begin{equation}\label{eq.z7cdm1p}
\sum_{n = 0}^\infty  {\frac{1}{{\left( {n + 1} \right)\binom{{n + z}}{z}\binom{{n + z - 1/2}}{{n + z}}}}} = \frac{z}{{\binom{{z - 3/2}}{{z - 1}}}}\left( {\zeta \left( 2 \right) - H_{z - 3/2}^{(2)} } \right),
\end{equation}
which is valid for $z\in\mathbb C\setminus\mathbb Z^{-}$ such that $2z\not\in\mathbb Z^{-}$, $z\ne 0$ and $z\ne 1/2$. Finally set $z=m$, a positive integer, in~\eqref{eq.z7cdm1p} and use
\begin{equation}\label{eq.vgfrd3}
\binom{{p - 1/2}}{p} = \frac{{\binom{{2p}}{p}}}{{2^{2p} }};
\end{equation}
which gives~\eqref{eq.alvbeou} on account of Lemma~\ref{lem.ho}.
\end{proof}

\begin{theorem}
If $m$ is a positive integer, then
\begin{equation}\label{eq.np2sv9u}
\begin{split}
&\sum_{n = 0}^\infty \frac{2^{2n}O_{n + m}}{(n + 1)\binom{n + m}{m} \binom{{2(n + m)}}{{n + m}}} \\
&\qquad = \frac{m}{4}\,\binom{2(m - 1)}{m - 1}^{-1} \left(7\,\zeta (3) - 8\,O_{m - 1}^{(3)} + O_{m - 1}(3\,\zeta (2) - 4\,O_{m - 1}^{(2)})\right).
\end{split}
\end{equation}
In particular,
\begin{equation}
\sum_{n = 0}^\infty \frac{2^{2n} O_{n + 1}}{\binom{2(n + 1)}{n + 1}(n + 1)^2} = \frac{7}{4}\,\zeta (3).
\end{equation}
\end{theorem}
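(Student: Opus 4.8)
The plan is to differentiate the general identity~\eqref{eq.z7cdm1p} with respect to $z$ and then set $z=m$, in the same spirit as the preceding theorem was obtained from a limiting case of~\eqref{eq.z7cdm1p}. Write $a_n(z)$ for the $n$-th summand on the left of~\eqref{eq.z7cdm1p}, so that in Gamma-function form $a_n(z)=\frac{\Gamma(z+1)\Gamma(n+1)\Gamma(1/2)}{(n+1)\Gamma(n+z+1/2)}$. A one-line logarithmic differentiation, combined with~\eqref{Harm_psi}, then gives $a_n'(z)=a_n(z)\big(H_z-H_{n+z-1/2}\big)$. Hence, differentiating~\eqref{eq.z7cdm1p} termwise (legitimate, the summands being analytic and the series converging locally uniformly, exactly as in the earlier proofs) and isolating the $H_z$-part by reinserting~\eqref{eq.z7cdm1p} itself, one obtains a closed form for $\sum_{n\ge 0}a_n(z)\,H_{n+z-1/2}$ in terms of $z$, $\binom{z-3/2}{z-1}$, $H_z$, $H_{z-3/2}$, $H_{z-3/2}^{(2)}$ and $H_{z-3/2}^{(3)}$. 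For the right-hand side one needs $\frac{d}{dz}\log\big(z/\binom{z-3/2}{z-1}\big)=H_z-H_{z-3/2}$ together with $\frac{d}{dz}H_{z-3/2}^{(2)}=2\zeta(3)-2H_{z-3/2}^{(3)}$, the latter following from $H_\alpha^{(2)}=\zeta(2)-\psi^{(1)}(\alpha+1)$ and $H_\alpha^{(3)}=\zeta(3)+\tfrac12\psi^{(2)}(\alpha+1)$.

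Next I specialize to $z=m\in\mathbb{Z}^{+}$. By~\eqref{eq.vgfrd3}, $\binom{n+m-1/2}{n+m}=\binom{2(n+m)}{n+m}2^{-2(n+m)}$ and $\binom{m-3/2}{m-1}=\binom{2(m-1)}{m-1}2^{-2(m-1)}$, so the half-integer binomials turn into precisely the binomials of~\eqref{eq.np2sv9u} and an overall factor $2^{2m}$ is removed. The half-integer harmonic numbers are converted by Lemma~\ref{lem.ho}: $H_{n+m-1/2}=2O_{n+m}-2\ln 2$, $H_{m-3/2}=2O_{m-1}-2\ln 2$, $H_{m-3/2}^{(2)}=4O_{m-1}^{(2)}-2\zeta(2)$ (so $\zeta(2)-H_{m-3/2}^{(2)}=3\zeta(2)-4O_{m-1}^{(2)}$, recovering the factor in~\eqref{eq.alvbeou}), and $H_{m-3/2}^{(3)}=H_{-1/2}^{(3)}+8O_{m-1}^{(3)}=8O_{m-1}^{(3)}-6\zeta(3)$. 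Substituting $H_{n+m-1/2}=2O_{n+m}-2\ln 2$ into $\sum_n a_n(m)H_{n+m-1/2}$ and using the value of $\sum_n a_n(m)$ supplied by~\eqref{eq.alvbeou} to eliminate the $\ln 2$-term produces $\sum_n a_n(m)\,O_{n+m}$, which equals $2^{2m}$ times the left side of~\eqref{eq.np2sv9u}; dividing out $2^{2m}$ yields~\eqref{eq.np2sv9u}. The particular case is the instance $m=1$, where $O_0=O_0^{(2)}=O_0^{(3)}=0$ and $\binom{0}{0}=1$, leaving the bare term $\tfrac74\zeta(3)$.

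The only genuine difficulty is bookkeeping: one must carry along four distinct half-integer (generalized) harmonic numbers generated by the two differentiations, and then verify that after applying Lemma~\ref{lem.ho} the spurious $H_m$- and $\ln 2$-contributions cancel pairwise, leaving exactly $7\zeta(3)-8O_{m-1}^{(3)}+O_{m-1}\big(3\zeta(2)-4O_{m-1}^{(2)}\big)$. The interchange of differentiation and summation is routine and is handled just as in the preceding proofs of the paper.
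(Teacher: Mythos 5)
Your proposal is correct and follows essentially the same route as the paper: differentiate~\eqref{eq.z7cdm1p} with respect to $z$, reinsert~\eqref{eq.z7cdm1p} to isolate the $H_{n+z-1/2}$ sum, then set $z=m$ and convert via Lemma~\ref{lem.ho} and~\eqref{eq.vgfrd3} (your use of~\eqref{eq.alvbeou} to cancel the $\ln 2$ terms is just~\eqref{eq.z7cdm1p} at $z=m$ in disguise). Your intermediate computations, including $\tfrac{d}{dz}H_{z-3/2}^{(2)}=2\zeta(3)-2H_{z-3/2}^{(3)}$ and $\zeta(3)-H_{m-3/2}^{(3)}=7\zeta(3)-8O_{m-1}^{(3)}$, all check out against the paper's displayed derivative identity.
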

\begin{proof}
Differentiate~\eqref{eq.z7cdm1p} with respect to $z$ to obtain
\begin{equation*}
\begin{split}
\sum_{n = 0}^\infty  {\frac{{H_{n + z - 1/2} }}{{\left( {n + 1} \right)\binom{{n + z - 1/2}}{{n + z}}\binom{{n + z}}{z}}}}  
&= \frac{z}{{\binom{{z - 3/2}}{{z - 1}}}}\left( \zeta \left( 2 \right) - H_{z - 3/2}^{(2)} \right)\left( {H_{z - 3/2} - H_{z - 1} } \right) \\
&\qquad + \frac{{2z}}{{\binom{{z - 3/2}}{{z - 1}}}}\left( {\zeta \left( 3 \right) - H_{z - 3/2}^{(3)} } \right) + \frac{{zH_{z - 1} }}{{\binom{{z - 3/2}}{{z - 1}}}}\left( {\zeta \left( 2 \right) - H_{z - 3/2}^{(2)} } \right)
\end{split}
\end{equation*}
and hence identity~\eqref{eq.np2sv9u} on account of Lemma~\ref{lem.ho} and identity~\eqref{eq.vgfrd3}.
\end{proof}

\begin{theorem}
If $m$ is a positive integer, then
\begin{equation}\label{eq.tu5lpx7}
\sum_{n = 0}^\infty  {\frac{{C_n}}{{\binom{{n + m}}{n}\binom{{2\left( {n + m} \right)}}{{n + m}}}}}  = \frac{m}{2}\,\frac{\left({H_{m - 1}  - 2\,O_{m - 1}  + 2\,\ln 2}\right)}{{\binom{{2\left( {m - 1} \right)}}{{m - 1}}}}.
\end{equation}
\end{theorem}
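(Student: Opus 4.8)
The plan is to reduce \eqref{eq.tu5lpx7} to the Catalan sum \eqref{eq.n0r1wj4} by means of a single binomial identity coming from the Legendre duplication formula. Writing $(2r)! = 2^{2r}r!\,\Gamma(r+1/2)/\sqrt\pi$ — which is the content of Lemma~\ref{lem.iyajmzy}, via \eqref{eq.budvepa} and \eqref{eq.vgfrd3} — I would first apply it with $r=n+m$ to get $\binom{n+m}{n}\binom{2(n+m)}{n+m} = 2^{2(n+m)}\Gamma(n+m+1/2)/(\sqrt\pi\,n!\,m!)$, and with $r=m$ to rewrite $\binom{2m}{m}$; together with $\binom{n+m-1/2}{n} = \Gamma(n+m+1/2)/(n!\,\Gamma(m+1/2))$ this yields the clean relation
\[
\binom{n+m}{n}\binom{2(n+m)}{n+m} \;=\; \binom{2m}{m}\,2^{2n}\binom{n+m-1/2}{n},
\]
valid for every non-negative integer $n$ and positive integer $m$. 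Consequently
\[
\sum_{n=0}^\infty \frac{C_n}{\binom{n+m}{n}\binom{2(n+m)}{n+m}} \;=\; \binom{2m}{m}^{-1}\sum_{n=0}^\infty \frac{C_n}{2^{2n}\binom{n+m-1/2}{n}}.
\]

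The next step is to evaluate \eqref{eq.n0r1wj4} at $z = m-1/2$; the hypotheses $z\neq 0$, $z\notin\mathbb Z^{-}$, $2z\notin\mathbb Z^{-}$ all hold because $m$ is a positive integer, and one obtains $\sum_{n\ge 0} C_n/\bigl(2^{2n}\binom{n+m-1/2}{n}\bigr) = (2m-1)\bigl(H_{m-1}-H_{m-3/2}\bigr)$. Then the elementary identity $\binom{2m}{m} = \tfrac{2(2m-1)}{m}\binom{2(m-1)}{m-1}$ converts the prefactor $(2m-1)\binom{2m}{m}^{-1}$ into $\tfrac{m}{2}\binom{2(m-1)}{m-1}^{-1}$, and finally Lemma~\ref{lem.ho} in the form $H_{m-3/2} = 2O_{m-1} - 2\ln 2$ replaces $H_{m-1}-H_{m-3/2}$ by $H_{m-1} - 2O_{m-1} + 2\ln 2$, which is exactly \eqref{eq.tu5lpx7}.

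I do not anticipate a real obstacle; the only place to be careful is the half-integer bookkeeping, i.e.\ applying the duplication formula with the correct argument and reading $\binom{n+m-1/2}{n}$ as $\Gamma(n+m+1/2)/(n!\,\Gamma(m+1/2))$ rather than through a naive factorial expression. As a sanity check I would look at $m=1$: there $\binom{2(m-1)}{m-1}=1$, $H_0=O_0=0$, the general term simplifies to $\tfrac12\,\tfrac{1}{(n+1)(2n+1)}$, and the claimed value $\ln 2$ agrees with \eqref{eq.gujh5lu}. One could instead specialize \eqref{psi_expr4} directly at $z = m-1/2$, $h = 1/2$, but routing through the already-proved \eqref{eq.n0r1wj4} is shorter.
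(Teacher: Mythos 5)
Your argument is correct and is essentially the paper's own: the paper also takes $h=1/2$ in \eqref{psi_expr4} and shifts $z\mapsto z-1/2$ (obtaining \eqref{eq.pp9z2ev}, then setting $z=m$), which is exactly your evaluation of \eqref{eq.n0r1wj4} at $z=m-1/2$ followed by the same duplication-formula bookkeeping via \eqref{eq.budvepa}, \eqref{eq.vgfrd3} and Lemma~\ref{lem.ho}. Routing through the already-proved \eqref{eq.n0r1wj4} rather than re-deriving from \eqref{psi_expr4} is a harmless shortcut, and your checks (including the $m=1$ case against \eqref{eq.gujh5lu}) are all accurate.
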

\begin{proof}
Set $h=1/2$ , write $z-1/2$ for $z$ in~\eqref{psi_expr4} and use~\eqref{eq.budvepa} to obtain
\begin{equation}\label{eq.pp9z2ev}
\sum_{n = 0}^\infty \frac{{\binom{{n - 1/2}}{n}}}{{\left( {n + 1} \right)\binom{{n + z - 1/2}}{{n + z}}\binom{{n + z}}{z}}} 
= \frac{{2z}}{{\binom{{z - 3/2}}{{z - 1}}}}\left( H_{z - 1} - H_{z - 3/2} \right),
\end{equation}
from which identity~\eqref{eq.tu5lpx7} follows ater setting $z=m$, a positive integer, and using~\eqref{eq.vgfrd3} and Lemma~\ref{lem.ho}.
\end{proof}

\begin{theorem}\label{thm_CO_xyz}
If $m$ is a positive integer, then
\begin{equation}\label{eq.h4z23f1}
\begin{split}
&\sum_{n = 0}^\infty \frac{C_n O_{n + m}}{\binom{{n + m}}{m} \binom{2(n + m)}{n + m}} \\ 
&\qquad = \frac{m}{4}\,\binom{2(m - 1)}{m - 1}^{- 1} \left( H_{m - 1}^{(2)} + 2\,\zeta (2) - 4\,O_{m - 1}^{(2)} 
+ 2\,O_{m - 1} \left( H_{m - 1} - 2\,O_{m - 1} + 2\ln 2 \right) \right).
\end{split}
\end{equation}
\end{theorem}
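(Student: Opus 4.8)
The plan is to differentiate identity~\eqref{eq.pp9z2ev} with respect to $z$ and then specialize to $z=m$, exactly as in the proofs leading to~\eqref{eq.np2sv9u} and~\eqref{eq.nr7v570}. The target series differs from the one in~\eqref{eq.tu5lpx7} only by the extra factor $O_{n+m}$, and such a factor is precisely what a $z$-derivative produces once the resulting half-integer harmonic number is rewritten via Lemma~\ref{lem.ho}.

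Combining the two $z$-dependent binomials in the summand of~\eqref{eq.pp9z2ev} into Gamma functions gives
\[
\binom{n+z-1/2}{n+z}\binom{n+z}{z}=\frac{\Gamma(n+z+1/2)}{\sqrt{\pi}\,\Gamma(z+1)\,\Gamma(n+1)},
\]
so the $z$-dependence of the $n$-th term is carried by the single factor $\Gamma(z+1)/\Gamma(n+z+1/2)$; term-by-term differentiation therefore multiplies that term by $\psi(z+1)-\psi(n+z+1/2)=H_z-H_{n+z-1/2}$, using~\eqref{Harm_psi}. On the right-hand side of~\eqref{eq.pp9z2ev} I would use the product rule together with $\frac{d}{dz}\log\binom{z-3/2}{z-1}^{-1}=\psi(z)-\psi(z-1/2)=H_{z-1}-H_{z-3/2}$ and $\frac{d}{dz}\left(H_{z-1}-H_{z-3/2}\right)=\psi^{(1)}(z)-\psi^{(1)}(z-1/2)=H_{z-3/2}^{(2)}-H_{z-1}^{(2)}$, obtaining the intermediate identity
\[
\sum_{n=0}^\infty\frac{\binom{n-1/2}{n}\left(H_z-H_{n+z-1/2}\right)}{(n+1)\binom{n+z-1/2}{n+z}\binom{n+z}{z}}=\frac{2\left(H_{z-1}-H_{z-3/2}\right)+2z\left(H_{z-1}-H_{z-3/2}\right)^2+2z\left(H_{z-3/2}^{(2)}-H_{z-1}^{(2)}\right)}{\binom{z-3/2}{z-1}}.
\]
Then I would set $z=m$, use~\eqref{eq.vgfrd3} to turn the half-integer binomials into central binomial coefficients times powers of $2$, and apply Lemma~\ref{lem.ho} in the forms $H_{n+m-1/2}=2O_{n+m}-2\ln 2$, $H_{m-3/2}=2O_{m-1}-2\ln 2$, $H_{m-3/2}^{(2)}=4O_{m-1}^{(2)}-2\zeta(2)$. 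The left side then becomes $2^{2m}$ times
\[
(H_m+2\ln 2)\sum_{n=0}^\infty\frac{C_n}{\binom{n+m}{m}\binom{2(n+m)}{n+m}}-2\sum_{n=0}^\infty\frac{C_nO_{n+m}}{\binom{n+m}{m}\binom{2(n+m)}{n+m}},
\]
and the first sum is known from~\eqref{eq.tu5lpx7}; solving the resulting linear equation for the second sum yields~\eqref{eq.h4z23f1}.

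The main obstacle is purely the bookkeeping that shows the answer collapses to the stated form — in particular that the quadratic term disappears. Writing $A=H_{m-1}-2O_{m-1}+2\ln 2$, so that~\eqref{eq.tu5lpx7} reads $\sum_n C_n/\big(\binom{n+m}{m}\binom{2(n+m)}{n+m}\big)=\tfrac{m}{2}A\binom{2(m-1)}{m-1}^{-1}$, and using $H_m=H_{m-1}+1/m$ (so that $m(H_m+2\ln 2)=m(H_{m-1}+2\ln 2)+1$), the contribution of the known series combines with the terms $2\left(H_{z-1}-H_{z-3/2}\right)=2A$ and $2z\left(H_{z-1}-H_{z-3/2}\right)^2=2mA^2$ as
\[
m(H_m+2\ln 2)\,A-A-mA^2=m(H_{m-1}+2\ln 2)\,A-mA^2=mA\big((H_{m-1}+2\ln 2)-A\big)=2m\,O_{m-1}\,A,
\]
since $(H_{m-1}+2\ln 2)-A=2O_{m-1}$. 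Thus the square cancels and only the linear term $2O_{m-1}A$ survives; adding the remaining contribution $m\big(H_{m-1}^{(2)}+2\zeta(2)-4O_{m-1}^{(2)}\big)$ and dividing by $4\binom{2(m-1)}{m-1}$ produces exactly the right-hand side of~\eqref{eq.h4z23f1}.
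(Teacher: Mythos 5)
Your proposal is correct and follows essentially the same route as the paper: differentiate \eqref{eq.pp9z2ev} with respect to $z$, specialize to $z=m$, convert half-integer quantities via Lemma~\ref{lem.ho} and \eqref{eq.vgfrd3}, and eliminate the known sum using \eqref{eq.tu5lpx7}. Your intermediate identity is an equivalent rearrangement of the one displayed in the paper (which absorbs the $H_z$ term into $H_{z-1}$ on the right), and your bookkeeping showing the quadratic term collapses to $2mO_{m-1}A$ checks out.
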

\begin{proof}
Differentiate~\eqref{eq.pp9z2ev} with respect to $z$, obtaining
\begin{equation*}
\begin{split}
\sum_{n = 0}^\infty  {\frac{{\binom{{n - 1/2}}{n}H_{n + z - 1/2} }}{{\left( {n + 1} \right)\binom{{n + z - 1/2}}{{n + z}}\binom{{n + z}}{z}}}}  &= \frac{{2z}}{{\binom{{z - 3/2}}{{z - 1}}}}\left( {H_{z - 1}^{(2)}  - H_{z - 3/2}^{(2)}  - \left( {H_{z - 1}  - H_{z - 3/2} } \right)^2 } \right)\\
&\qquad + \frac{{2zH_{z - 1} }}{{\binom{{z - 3/2}}{{z - 1}}}}\left( {H_{z - 1}  - H_{z - 3/2} } \right),
\end{split}
\end{equation*}
from which~\eqref{eq.h4z23f1} follows.
\end{proof}

\begin{remark}
When $m=1$ in Theorem \ref{thm_CO_xyz} we recover~\eqref{eq.nruaw64}.
\end{remark}



\end{document}